\documentclass{amsart}
\usepackage{amssymb,amsmath,amsfonts,pictex,graphicx,etex,color,tikz}
\usetikzlibrary{shapes,shapes.arrows,snakes,positioning}

\newcommand{\A}{{\sf a}}
\newcommand{\B}{{\sf b}}
\newcommand{\C}{{\sf c}}
\newcommand{\E}{{\sf e}}
\renewcommand{\P}{{\sf p}}

\newcommand{\V}{{\sf v}}
\newcommand{\U}{{\sf u}}
\newcommand{\W}{{\sf w}}
\newcommand{\X}{{\sf x}}
\newcommand{\Y}{{\sf y}}

\newcommand{\Z}{\mathbb{Z}}
\newcommand{\R}{\mathbb{R}}

\newcommand{\std}{{\sf std}}
\newcommand{\sdot}{\! \cdot \!}
\newcommand{\muL}{\mu\raisebox{-3pt}{\scriptsize{\ensuremath L}}}
\newcommand{\normL}[1]{\| #1 \| \raisebox{-3pt}{\scriptsize{\ensuremath L}}}

\definecolor{thingie}{rgb}{.75,.4,.75}
\definecolor{brown}{rgb}{.545,.27,.075}

\newcommand{\eN}{\mathcal{N}}
\newcommand{\eP}{\mathcal{P}}

\DeclareMathOperator{\Vol}{Vol}
\DeclareMathOperator{\vol}{Vol}
\DeclareMathOperator{\Area}{Area}

\DeclareMathOperator{\CHull}{CHull}

\newcommand{\eadd}{\overset{\scriptscriptstyle +}{\asymp}}

\theoremstyle{plain}
\newtheorem{theorem}{Theorem}[section]

\newtheorem{prop}[theorem]{Proposition}
\newtheorem{lemma}[theorem]{Lemma}
\newtheorem{corollary}[theorem]{Corollary}
\newtheorem{remark}[theorem]{Remark}

\newtheorem{defn}[theorem]{Definition}

\newtheorem*{sphere-asymp}{Theorem \ref{sphere-asymp}}
\newtheorem*{ball-asymp}{Corollary \ref{ball-asymp}}
\newtheorem*{sphereball}{Theorem \ref{sphereball}}

\begin{document}

\title{The geometry of spheres in free abelian groups}
\author[Duchin Leli\`evre Mooney]{Moon Duchin, Samuel Leli\`evre, and Christopher Mooney}
\date{\today}

\begin{abstract}
We study word metrics on $\Z^d$ by developing tools that are fine enough to measure dependence on the generating set.
We obtain counting and distribution results for the words of length $n$.
With this, we show that counting measure on spheres always converges to a limit measure on a limit shape (strongly, in an appropriate sense).
The existence of a limit measure is quite strong---even virtually abelian groups need not satisfy these kinds 
of asymptotic formulas.
Using the limit measure, we can reduce probabilistic questions about word metrics  to problems in convex geometry of
Euclidean space.  
As an application, we give asymptotics for the spherical growth function with respect to any generating set, as well as statistics for other ``size-like" functions.
\end{abstract}

\maketitle

\section{Introduction}

Suppose one wants to study the density of group elements that have a certain property, or
the average value of some statistic in a group.  If the group is finitely generated by a
set $S$, then there is an associated word metric on the group that measures how far an
element is from the identity, and the ball of radius $n$ is a finite set.  Arguably the
most natural approach to a density question is to put counting measure on the ball of
radius $n$, measure the proportion of those points with the desired property, and let $n$
tend to infinity.

Given a group $G$ with a fixed finite generating set $S$ (say symmetrized, so that
$S=S^{-1}$), let $S_n$ denote the sphere of radius $n$ in the Cayley graph, which is just
the set of group elements whose distance from the identity in the word metric is exactly
$n$; that is, they are group elements whose minimal spelling has $n$ letters, or which
are reached by geodesics of length $n$ based at the identity.  Likewise, $B_n$ is the
(closed) ball of radius $n$.  Then a reasonable way to consider the density in $G$ of a
property with respect to a generating set is to find the expectation over large balls
$B_n$.  Furthermore, one might be interested in understanding the frequency of a property among
long words, which amounts to finding the expectation over
large spheres $S_n$---a strictly harder problem, as we will demonstrate.

More generally, we will study the averages $\frac{1}{|B_n|}
\sum_{\X\in B_n} f(\X)$ and  $\frac{1}{|S_n|} \sum_{\X\in S_n} f(\X)$, not just for
characteristic functions.  For some functions we will find that these averages grow on the
order of $n^k$, in which case we normalize the average and seek a limit, or the
coefficient of $n^k$.

We will show that averages for ``size-like" functions
over spheres in $(\Z^d,S)$ must exist; further, the averages can be reduced to
integrals on convex polyhedra in Euclidean space, with respect to an appropriate
geometrically defined measure.  One of the themes will be to study statistics that are a
priori dependent on the choice of generating set.  In some cases, we will be able to
quantify the extent of the dependence; in other cases, we will find that there is no
dependence.

Recall that a function $g:\R^d\to\R$ is called {\em homogeneous (of order $k$)} if
$g(a\X)=a^kg(\X)$ for $a\ge 0$.  Let us call a function $f:\Z^d \to \R$ {\em coarsely
homogeneous} if there is some homogeneous function $g:\R^d\to\R$ such that $f \eadd g$,
meaning that $|g(\X)-f(\X)|$ is uniformly bounded over $\X\in\Z^d$.  We say that $f$ is
{\em asymptotically homogeneous} if there is some homogeneous function such that $f\sim
g$, meaning that the ratio $f(\X)/g(\X)\to 1$ as $\X\to\infty$.
(Here the notation $\X\to\infty$ means that sequences leave all compact sets.)
In particular, coarsely homogeneous implies asymptotically homogeneous when $k\ge 1$ and $g$ is nonzero.

For any free abelian group $\Z^d$ with any finite generating set $S$, let $Q$ be the
convex hull of the points corresponding to $S$ in $\R^d$, and let $L$ be its boundary
polyhedron, and let $\hat A$ denote the cone from $A\subseteq L$ to the origin, so that
$Q=\hat L$.  For $A\subseteq L$ define the {\em cone measure} by
$$\mu(A)=\muL(A)= \frac{\vol(\hat A)}{\vol(Q)}.$$
This is the Euclidean volume of the cone from $A$ to the origin normalized by the volume
of $Q$, so that $\mu=\muL$ is a Borel probability measure on $L$ (and we will suppress
$L$ from the notation when it is understood).  As we will discuss below, it is not hard to show that 
$S_n$ looks more and more like the the dilate $nL$, or in other terms, that
$\frac 1n S_n\to
L$ as a Gromov-Hausdorff limit.  Our main contribution is to establish that counting
measure on spheres limits to the cone measure on $L$ in an appropriate sense to carry out
averaging operations.%
\footnote{We note that working in the asymptotic cone, $\R^d$, would be a substitute for the rescaling by dilations;  
furthermore,  this suggests a  
natural generalization of these questions to other groups.  To do this, one would need a theory of ultralimits of measures;
in logic, this goes by the name of Loeb measure.  These ideas have not yet been imported into geometric group theory,
a translation that seems as though it would be quite fruitful.}

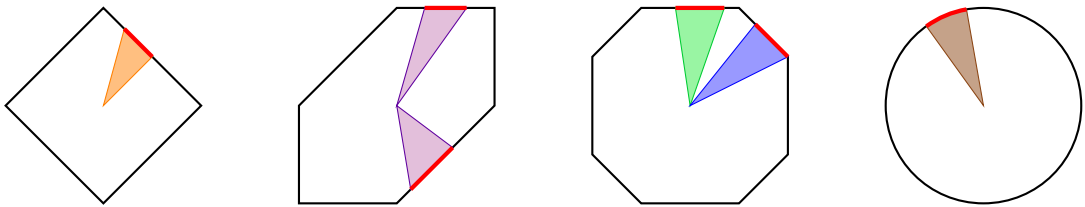
\begin{figure}[ht]
\hspace*{-.3in}
\begin{tikzpicture}
\begin{scope}[scale=1.3]
\draw[black, thick](0,-1)--(1,0)--(0,1)--(-1,0)--cycle;
\draw[orange,fill=orange!50](0,0)--(3/14,11/14)--(7/14,7/14)--cycle;
\draw[ultra thick,red](3/14,11/14)--(7/14,7/14);

\begin{scope}[xshift=3cm]
\draw[black, thick](-1,-1)--(0,-1)--(1,0)--(1,1)--(0,1)--(-1,0)--cycle;
\draw[blue!50!purple,fill=blue!25!purple!25](0,0)--(2/7,1)--(5/7,1)--cycle;
\draw[ultra thick,red](2/7,1)--(5/7,1);
\draw[blue!50!purple,fill=blue!25!purple!25](0,0)--(1/7,-6/7)--(4/7,-3/7)--cycle;
\draw[ultra thick,red](1/7,-6/7)--(4/7,-3/7);
\end{scope}

\begin{scope}[xshift=6cm, scale=1/2]
\draw[black, thick] (2,1)--(1,2)--(-1,2)--(-2,1)--(-2,-1)--(-1,-2)--(1,-2)--(2,-1)--cycle;
\draw[blue!20!green,fill=blue!10!green!40](-.3,2)--(.7,2)--(0,0)--cycle;
\draw[ultra thick,red](-.3,2)--(.7,2);
\draw[blue,fill=blue!40](4/3,5/3)--(2,1)--(0,0)--cycle;
\draw[ultra thick,red](4/3,5/3)--(2,1);
\end{scope}

\begin{scope}[xshift=9cm]
\draw[black, thick](0,0) circle (1);
\draw[brown, fill=brown!50](0,0)-- (100:1) arc (100:880/7:1)--cycle;
\draw[ultra thick,red] (100:1) arc (100:880/7:1);
\end{scope}
\end{scope}

\end{tikzpicture}
\caption{Six arcs are shown in red in this figure, each having cone measure $1/14$; in
other words, all of the colored regions have $1/14$ as much area as the convex body they
are in.  In the square and the hexagon, all sides have equal measure because in each example
the triangles subtended by the sides are mutually congruent.  
On the other hand, for this octagon  generated by the {\em chess-knight} moves $\{(\pm 2,\pm 1),(\pm
1,\pm 2)\}$, the measure of its two types of sides (shown with green and blue) is in
the ratio $4:3$.  Cone measure is defined on any convex, centrally symmetric figure, and
in particular it is uniform on the circle.}
\end{figure}

\begin{theorem}[Sphere averages] \label{sphere-asymp}
For any finite presentation $(\Z^d,S)$ and any function $f:\Z^d\to\R$ asymptotic 
to  $g:\R^d\to\R$ with $g$ homogeneous of order $k$, let $L=\partial\CHull(S)$ and let $\muL$ be cone measure on $L$. 
Then

$$\frac{1}{|S_n|} \sum_{\X\in S_n} f(\X) = (v_{g,S})\sdot n^k + O(n^{k-1}),$$
with coefficient $v_{g,S}:=  \int_L g(\X) \ d\muL(\X)$.
\end{theorem}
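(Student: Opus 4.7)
The plan is to exploit the earlier fact that $B_n = nQ \cap \Z^d$, which identifies $S_n$ with the set of lattice points in the shell $nQ \setminus (n-1)Q$. Since $f \sim g$, the contribution of $f - g$ to the averaged sum is of lower order than $n^k$ and can be absorbed into the stated error, so it suffices to carry out the computation with $g$ itself. The strategy is then to approximate $\sum_{\X \in S_n} g(\X)$ and $|S_n|$ by Lebesgue integrals over the shell, and to use homogeneity together with a polar decomposition of Lebesgue measure on $Q$ to identify the leading coefficient as $v_{g,S}$.

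For the integrals, homogeneity $g(r\theta) = r^k g(\theta)$ gives $\int_{nQ} g = n^{d+k} \int_Q g$, and telescoping yields
$$\int_{nQ \setminus (n-1)Q} g(y)\,dy \;=\; \bigl(n^{d+k} - (n-1)^{d+k}\bigr)\!\int_{Q} g \;=\; (d+k)\, n^{d+k-1}\!\int_{Q} g \;+\; O(n^{d+k-2}),$$
together with $\vol(nQ \setminus (n-1)Q) = d\,\vol(Q)\, n^{d-1} + O(n^{d-2})$. Dividing (temporarily granting the lattice-to-integral comparison) produces $\tfrac{(d+k)\int_Q g}{d\,\vol(Q)} \cdot n^k + O(n^{k-1})$. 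To identify this constant as $v_{g,S}$, decompose Lebesgue measure on $Q$ in polar form using the Minkowski functional of $Q$: writing $y = r\theta$ with $\theta \in L$ and $r \in [0,1]$, one has $dy = r^{d-1}\,dr\,d\sigma_L(\theta)$, where $\sigma_L$ is the unnormalized cone measure on $L$ satisfying $\muL = \sigma_L / (d\,\vol(Q))$. Then $\int_Q g = \tfrac{1}{d+k} \int_L g\,d\sigma_L$ by homogeneity, so $\tfrac{(d+k)\int_Q g}{d\,\vol(Q)} = \int_L g\,d\muL = v_{g,S}$.

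The principal obstacle is the lattice-to-integral comparison, which must be sharpened to error $O(n^{d+k-2})$ on the shell so that after division by $|S_n| \asymp n^{d-1}$ the $O(n^{k-1})$ rate survives. A crude cube-by-cube bound only gives $O(n^{d+k-1})$ on each of $\sum_{\X \in B_n} g$ and $\sum_{\X \in B_{n-1}} g$ separately, which is too weak; the key point is that the leading boundary correction cancels in the telescoping difference, in the same spirit as the Ehrhart-type expansion $|B_n| = n^d \vol(Q) + \tfrac12 n^{d-1} \Area(L) + O(n^{d-2})$ whose first two terms cancel in $|S_n|$. Using that $g$ satisfies $\|\nabla g(\X)\| = O(\|\X\|^{k-1})$ on a neighborhood of the shell (by homogeneity), the analogous two-term expansion for the weighted lattice sum goes through and likewise telescopes, supplying the required accuracy. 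Reassembling all pieces and reverting from $g$ back to $f$ completes the proof.
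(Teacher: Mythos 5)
Your argument rests on the premise that $B_n = nQ\cap\Z^d$, and hence that $S_n$ is the set of lattice points in the geometric shell $nQ\setminus(n-1)Q$. This is false in general, and it is precisely the point where the sphere problem is hard. What is true (Lemma~\ref{bounded-diff}) is only the two-sided bound $\normL{\W}\le|\W|<\normL{\W}+K$ with $K=\max\{|Q\cap\Z^d|\}$, which can be much larger than $1$: for the chess-knight generators $\{(\pm2,\pm1),(\pm1,\pm2)\}$ one has $K=3$, and the point $(2,2)$ satisfies $|(2,2)|=4$ but $\normL{(2,2)}=4/3$, so it belongs to $S_4$ while lying deep inside $2Q$, nowhere near the shell $4Q\setminus 3Q$. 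Thus $S_n$ is only known to live in the thick annulus $nQ\setminus(n-K)Q$, and the lattice points of the thin shell $nQ\setminus(n-1)Q$ are neither contained in nor contain $S_n$. Everything downstream of this identification --- the telescoped integral, the division by $|S_n|$, and the two-term Ehrhart-type expansion --- is therefore computing averages for the Minkowski norm $\normL{\cdot}$, not for the word metric. The missing content is exactly what the paper spends its technical sections on: the Geodesic Spellings lemma and the map $\Phi_n(\W)=\W-(|\W|-n)\V$, which produce, sector by sector, a bijection up to lower-order error between $S_n\cap\Delta\sigma$ and $\Z^d\cap\Delta_n\sigma$. Without some substitute for that step the proposal does not prove the theorem.

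Two secondary points. First, even granting the identification, the hypothesis $f\sim g$ only yields $\sum_{S_n}(f-g)=o(n^k\,|S_n|)$, which gives the limit statement but not the claimed $O(n^{k-1})$ error for $f$ (one needs a quantitative rate for $f/g\to 1$, or coarse homogeneity, to get that); your remark that $f-g$ ``can be absorbed into the stated error'' glosses over this. Second, a homogeneous $g$ of order $k$ need not be differentiable, so the gradient bound $\|\nabla g(\X)\|=O(\|\X\|^{k-1})$ you invoke for the two-term lattice-to-integral expansion is not available without an extra regularity assumption. Your polar-decomposition identification of the constant $\frac{(d+k)\int_Q g}{d\vol(Q)}=\int_L g\,d\muL$ is correct and is essentially the content of Theorem~\ref{sphereball}, but it addresses the easy part of the problem.
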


Notice that this limit measure $\mu$ is uniform on each face of $L$, in contrast
to the hitting measure for random walks in the word metric, which has a Gaussian distribution.
This is of interest because randomness problems in groups are often approached by studying asymptotics of random 
walks, rather than probabilities with respect to the word metric.  This shows that they are in general quite
different.

To establish Theorem~\ref{sphere-asymp}, we prove a counting lemma relating the distribution of sphere points
to the points of a lattice, and then use Ehrhart polynomials to relate lattice points to volume.
As a consequence of this main theorem, we get the leading term for the average value over balls in the word metric.

\begin{remark}[Ball averages]\label{ball-asymp}  With the same assumptions as above, let $Q=\CHull(S)$.  Then 
$$ \frac{1}{|B_n|} \sum_{\X\in B_n}  f(\X) = (V_{g,S})\sdot n^k + O(n^{k-1}).$$
with coefficient $V_{g,S}:=\int_Q g(\X) \ d\Vol(\X)$.
\end{remark}

Crucially, one may observe that this statement about ball averages can be observed much more easily than the 
sphere averages in the main theorem.
(Just use the fact that $\frac 1n B_n$ becomes  
uniformly distributed in $Q$, and that the error term is counted by points in a region of lower-order volume.)
Going further, one can deduce from ball asymptotics that if 
the counting measures on $\frac 1n S_n$ do converge to a measure on $L$, then 
it must be cone measure:  that is the unique measure on $L$ which, considering the
necessary scaling properties, is compatible with Lebesgue measure on $Q$.  However,
there is no guarantee that the limit in Theorem~\ref{sphere-asymp} exists at all, 
even given the limit statement in Remark~\ref{ball-asymp}.   In the next section, we will give further examples to illustrate that 
the sphere problem is strictly harder than the ball problem.

As a consequence of Theorem~\ref{sphere-asymp}, we can deduce a distribution result:  
the ball average for size-like functions compares
to the sphere average by a simple multiplicative factor which is independent of the 
generating set.  This factor depends only on the dimension $d$ and the
growth order $k$.

\begin{theorem}[Spheres versus balls]\label{sphereball}
For any function $f:\Z^d\to \R$ that is asymptotically homogeneous of order $k$, 
$$\lim_{n\to\infty} \frac{1}{|B_n|} \sum_{\X\in B_n} \frac 1{n^k} f(\X) 
=\left(\frac{d}{d+k}\right)   \lim_{n\to\infty} \frac{1}{|S_n|} \sum_{\X\in S_n} \frac 1{n^k} f(\X) .$$

That is, the coefficients of growth for sphere averages and ball averages are related by the simple expression
$V_{g,S}= \left(\frac{d}{d+k}\right) v_{g,S}$.
\end{theorem}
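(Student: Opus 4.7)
The plan is to derive the ball asymptotic directly from Theorem~\ref{sphere-asymp} by summation, writing the ball as a disjoint union of spheres:
$$B_n = \bigsqcup_{j=0}^{n} S_j, \qquad \sum_{\X \in B_n} f(\X) = \sum_{j=0}^{n} \sum_{\X \in S_j} f(\X).$$
Applying Theorem~\ref{sphere-asymp} term by term gives $\sum_{\X \in S_j} f(\X) = v_{g,S}\, j^k |S_j| + O(j^{k-1} |S_j|)$ for each $j \ge 1$. So the problem reduces to summing a weighted version of the sphere sizes $|S_j|$.

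To turn this into an estimate in $n$, we need the asymptotics of $|S_j|$. The counting lemma mentioned in the excerpt (and the accompanying Ehrhart-polynomial argument) yields $|B_n| = \vol(Q)\, n^d + O(n^{d-1})$, where $Q = \CHull(S)$. By differencing,
$$|S_n| = |B_n| - |B_{n-1}| = d\, \vol(Q)\, n^{d-1} + O(n^{d-2}).$$
Substituting this into the sphere-by-sphere sum and invoking the standard power-sum identity $\sum_{j=0}^{n} j^{d+k-1} = \frac{n^{d+k}}{d+k} + O(n^{d+k-1})$, I obtain
$$\sum_{\X \in B_n} f(\X) = v_{g,S}\cdot d\,\vol(Q) \cdot \frac{n^{d+k}}{d+k} + O(n^{d+k-1}).$$

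Dividing by $|B_n| = \vol(Q)\, n^d + O(n^{d-1})$ then collapses to
$$\frac{1}{|B_n|} \sum_{\X \in B_n} f(\X) = \frac{d}{d+k} \, v_{g,S}\, n^k + O(n^{k-1}),$$
which exhibits the identification $V_{g,S} = \tfrac{d}{d+k}\, v_{g,S}$ asserted in Theorem~\ref{sphereball}. The factor $d/(d+k)$ arises purely from the arithmetic of summing the $(d+k-1)$-st power over $j \le n$ and renormalizing by $n^d$; no additional geometry beyond the sphere theorem is required.

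The one place requiring attention is the error accounting. The accumulated error from the sphere-level estimate is $\sum_{j=0}^n O(j^{k-1} |S_j|) = \sum_{j=0}^n O(j^{d+k-2}) = O(n^{d+k-1})$, and dividing by $|B_n| \asymp n^d$ indeed yields the desired $O(n^{k-1})$ remainder, so the limit exists and matches the claimed coefficient. I expect no real obstacle here beyond carefully chaining the asymptotic expansions; the work was front-loaded into Theorem~\ref{sphere-asymp}.
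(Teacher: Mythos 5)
Your argument is correct, but it takes a genuinely different route from the paper's. The paper proves this by first noting (as in Remark~\ref{ball-asymp}) that the normalized ball average is a Riemann sum converging to the integral of $g$ over $Q$, and then passing to polar/cone coordinates $Q \cong I\times L$, where homogeneity of $g$ turns the radial integral into $\int_0^1 d\, t^{d+k-1}\,dt = \frac{d}{d+k}$ times $\int_L g\,d\mu$; the factor $\frac{d}{d+k}$ appears as a one-line integral. You instead decompose the ball discretely into shells $B_n=\bigsqcup_{j\le n}S_j$, apply Theorem~\ref{sphere-asymp} on each shell, feed in the spherical growth asymptotic $|S_j|=d\,\Vol(Q)\,j^{d-1}+O(j^{d-2})$ from Theorem~\ref{growth}, and extract $\frac{d}{d+k}$ from the power sum $\sum_{j\le n}j^{d+k-1}$ --- essentially an Abel-summation version of the same polar-coordinate computation. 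What your route buys is that the ball statement is derived entirely from the sphere statement together with the spherical growth function, with no separate appeal to equidistribution of $\frac1n B_n$ in $Q$; what it costs is reliance on the quantitative error terms in Theorems~\ref{sphere-asymp} and~\ref{growth} rather than just the limits (though since the relevant partial sums diverge, a Ces\`aro-type argument would let you get by with the limit versions alone, treating the case $v_{g,S}=0$ separately). One small bookkeeping point: your bound $\sum_{j\le n}O(j^{d+k-2})=O(n^{d+k-1})$ picks up a logarithm when $d+k\le 1$, which is harmless for the limit statement actually being asserted. Your final coefficient $\frac{d}{d+k}v_{g,S}$ with $v_{g,S}=\int_L g\,d\mu$ is the correctly normalized one.
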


To put this result in context, we remark that there are three situations in which it is clear that the sphere-average should
equal the ball-average.  
 One case is that of any function averaged over a group with exponential growth, where almost all of the points on the ball will be concentrated on its boundary sphere.
Alternately, for any function averaged over $\Z^d$,  the points in the ball
again become increasingly concentrated in the boundary as $d\to\infty$.  
Finally, sphere-averages clearly equal ball-averages for those size-like functions on $\Z^d$ with 
 $k = 0$ (so $f$ is close to a scale-invariant function).   These last two cases provide a plausibility check on the 
$\frac{d}{d+k}$ factor in this theorem.

Next, we can apply Theorem~\ref{sphere-asymp} to reduce problems about the asymptotic study of the geometry of
Cayley graphs for $\Z^d$ to problems in convex geometry.  

We find a supply of examples of asymptotically homogeneous functions from considering
distance in the word metric.
Using the standard embedding of $\Z^d$ in $\R^d$ as the integer lattice, the word metric on the
Cayley graph for $(\Z^d,S)$ is within bounded distance of a norm on $\R^d$, namely the
norm induced by the convex, centrally symmetric polyhedron $L$.  We will denote this
Minkowski norm by $\normL{\X}$ and recall that it is defined as the unique norm for which
$L$ is the unit sphere.  Then it is a basic fact  (Lemma~\ref{bounded-diff} below) 
that there is a uniform bound $K$ such that
$$\normL{\X} \  \le  \  |\X|  \ \le \   \normL{\X} + K$$
for all $\X\in \Z^d$, where $|\cdot |$ is the word length in the Cayley graph and $K$ 
is the largest distance in the word metric from the identity to any lattice point in
$Q$.  
Burago proved more generally that periodic metrics on $\R^d$
are at bounded distance from norms in \cite{burago}; we give a hands-on proof with the optimal constant for 
word metrics here.
This ensures that distance in the Cayley graph for any finite generating set can be regarded as a coarsely 
homogeneous function.  It is also immediate that $f(x)=|\X|^p$ is
asymptotically homogeneous of order $p$, and it follows from this that the $p$th moments
of a word metric---expected position in a large ball, variance and standard deviation, skewness, and so on---are 
in a sense independent of the choice of finite generating set (discussed below).  As another application of our counting
results, we find asymptotics for the growth function $\beta(n)$ and spherical growth function $\sigma(n)$---the number of 
words of length up to $n$ and exactly $n$, respectively---for $\Z^d$
with an arbitrary generating set.

\begin{theorem}[Growth functions]\label{growth}  
Fixing $(\Z^d,S)$ as above, let $\beta(n)=\# B_n$ and $\sigma(n)=\# S_n$ be
the growth function and the spherical growth function, respectively.  Then 
$$\beta(n)= Vn^d + O(n^{d-1}) \ ; \hspace{.5in}
\sigma(n) = (d\sdot V)n^{d-1} + O(n^{d-2}) \ ;$$
where $V=\vol(Q)$ and $d$ is the dimension.
\end{theorem}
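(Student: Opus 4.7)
The plan splits cleanly into the ball count and the sphere count.

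For $\beta(n)$, the bounded-difference lemma (Lemma~\ref{bounded-diff}) gives the sandwich
$$(n-K)Q \cap \Z^d \;\subseteq\; B_n \;\subseteq\; nQ \cap \Z^d.$$
Since $Q$ is a lattice polytope (its vertices lie in $S \subset \Z^d$), Ehrhart's theorem, or a simple unit-cube comparison, gives $|mQ \cap \Z^d| = V m^d + O(m^{d-1})$. Using this on each side of the sandwich at $m = n$ and $m = n-K$ and absorbing $(n-K)^d - n^d = O(n^{d-1})$ into the error yields $\beta(n) = V n^d + O(n^{d-1})$.

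For $\sigma(n)$, the naive identity $\sigma(n) = \beta(n) - \beta(n-1)$ inherits an $O(n^{d-1})$ error from the ball count, not the required $O(n^{d-2})$. Moreover, $S_n$ is genuinely different from $\partial(nQ) \cap \Z^d$: it contains many points $\X$ with $\normL{\X} < n$ and $|\X| = n$ due to the discretization defect (the chess-knight octagon, for instance, has $|\partial(nQ) \cap \Z^2| = 12n$, while the theorem predicts $\sigma(n) = 28n + O(1)$). So the sphere must be counted directly. The approach is to invoke the counting lemma that underlies the proof of Theorem~\ref{sphere-asymp}, which identifies $S_n$ with a disjoint union of $(d-1)$-dimensional lattice-point counts attached to the face structure of $L$. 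Applying Ehrhart in dimension $d-1$ to each piece and absorbing the lower-dimensional boundary overcounts into the $O(n^{d-2})$ error yields $\sigma(n) = C\, n^{d-1} + O(n^{d-2})$ for some constant $C = C(S)$.

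To identify $C = d V$, I would bootstrap from the ball estimate: summing the sphere estimate over $k = 0, 1, \ldots, n$ gives
$$\beta(n) = \sum_{k=0}^n \sigma(k) = \frac{C}{d}\, n^d + O(n^{d-1}),$$
which must agree with $\beta(n) = V n^d + O(n^{d-1})$ from the ball count. Matching leading coefficients forces $C = d V$.

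The main obstacle is the sphere estimate: obtaining the sharper $O(n^{d-2})$ error requires the full strength of the counting lemma, since the sphere is not the lattice-boundary of $nQ$, and the ball estimate is not refined enough for differencing to succeed. Once the counting lemma is in hand, the rest is a routine Ehrhart calculation together with the bootstrap identification of the leading coefficient.
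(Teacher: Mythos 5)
Your proposal is correct and follows essentially the same route as the paper: the ball count comes from the sandwich $(n-K)Q\cap\Z^d\subseteq B_n\subseteq nQ\cap\Z^d$ (Lemma~\ref{bounded-diff}) plus Ehrhart, and the sphere count comes from the sector-by-sector counting lemma $\#(S_n\cap\Delta\sigma)=\#(\Z^d\cap\Delta_n\sigma)+O(n^{d-2})$ combined with the Ehrhart difference $\#(\Z^d\cap n\hat\sigma)-\#(\Z^d\cap(n-1)\hat\sigma)=d\stimes\vol(\hat\sigma)n^{d-1}+O(n^{d-2})$. The only cosmetic difference is that you identify the leading constant $C=dV$ by summing the sphere estimate and matching against the ball count, whereas the paper gets it directly from $\sum_\sigma\vol(\hat\sigma)=\vol(Q)$; both are fine.
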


There were previously known results in extremely special cases, as surveyed in \cite[VI.2]{delaharpe},
but this settles the first-order  growth asymptotics for $\Z^d$ completely.  These formulas look deceptively
simple, but some obvious possible generalizations are not true:  
in the next section, we show an example of 
a virtually abelian group which does not satisfy asymptotic growth formulas of this form.

\subsection*{Acknowledgments}

We thank Alex Eskin and Ralf Spatzier.  The first author is partially supported by NSF grant DMS-0906086, and the 
third author is partially supported by NSF grant RTG-0602191.

\section{The sphere problem}

It is a problem of extremely classical interest to count the lattice points in balls in metric spaces.
In one well-known form, this is the Gauss circle problem:
where $R(n)$ is the number of standard lattice points in the round disk of radius $n$ centered
at the origin in the plane, Gauss first proved that 
$$R(n)=\pi n^2 + O(n).$$
Given strong enough estimates for functions on balls, one can derive estimates for 
the annular regions $\Delta_n=B_n\setminus B_{n-1}$ defined as differences of successive balls.
From Gauss's estimate in the circle problem we get $R(n)-R(n-1)=2\pi n + O(n)$, which is vacuous:  the estimate tells
us nothing about lattice points in the annulus.  Historically, the next progress on the circle problem was
by Sierpinski, who proved in 1906 that 
$$R(n)=\pi n^2 + O(n^{2/3}),$$
from which we get $R(n)-R(n-1)=2\pi n + O(n^{2/3})$, which is a nontrivial estimate for annuli.
Finding the optimal error term for the circle problem (conjecturally $n^{1/2 + \epsilon}$) is a deep problem 
with ties to the Riemann hypothesis.
Sphere asymptotics for word metrics can be regarded as a group-theoretic version of the Gauss circle problem, 
because for integer-valued metrics such as word metrics, the annular region $B_n\setminus B_{n-1}$  is precisely the sphere $S_n$.
(And furthermore, the counting results rely crucially on enumerating integer points in geometric annuli $\Delta_nL=nQ\setminus(n-1)Q$.)
This work studies counting and distribution problems in all finitely generated word metrics on free abelian groups.
Though we note that  our error term in this case
is already of optimal order, so the problem is certainly not as rich!

There are several issues that should be clarified at the outset.
First, functions that can be averaged over balls do not necessarily admit well-defined averages over spheres and annuli.
Furthermore, the spherical estimates that we obtain in this paper are strictly better than the difference of ball estimates, even though 
the error order is optimal in the ball estimates.  
In this section, we present simple illustrations of some of the subtleties.

Here is a straightforward example in lattice-point counting to explicitly illustrate this issue.
Take $Q$ to be the unit square $[-\frac 12, \frac 12]\times[-\frac 12,\frac 12]$, $L$ its boundary,
and $\|\cdot\|_L$ the corresponding Minkowski norm on $\R^2$
(the norm whose unit circle is $L$, which in this case is half of the sup norm).  
The ball of radius $n$ with respect to
$\|\cdot\|_L$ is the dilate $nQ$ of $Q$ by $n$.
Since these balls are getting ``fatter'' as $n$ gets large, it is straightforward to count the number of lattice
points lying inside $nQ$ to first order:
\[
	\#\Z^2\cap nQ=n^2\Area(Q)+O(n)=n^2 + O(n).
\]
However, it does not follow that the number of lattice points in the annuli equals $2n+O(1)$; 
instead, we get an oscillating sequence.
Letting $\Delta_nL=nQ\setminus(n-1)Q$, we find that whenever $n$ is odd, $\Delta_nL$ contains
no lattice points at all.  But when $n$ is even, it contains $4n$ lattice points.  So the sequence $\#\Z^2\cap\Delta_nL$ is oscillating
between $4n$ and $0$.  Correspondingly, there is no well-defined coefficient of $n$ in the lattice-point count $n^2+O(n)$ 
given above.
Thus, the indicator function of $\Z^2$ has a well-defined average over the ball $nQ$ but not over the annular 
region which is the difference of two balls.  The same phenomenon, that ball averages are well-defined but sphere averages 
are not, can also be observed in groups with complicated growth functions, as in Cannon's example discussed below.

In this paper, we will deal not only with geometric indicator functions, but also for averaging of more general functions, 
which can sometimes pose analogous difficulties.
To take a number-theoretic example, one can study random properties of the integers by choosing
uniformly over $\{1,2,\ldots, n\}$ and letting $n\to\infty$.  This is what is meant by
classical statements of analytic number theory (see \cite{hardy-wright}) such as
\begin{center}
{\em The probability that two integers are relatively prime is $6/\pi^2$}.
\end{center}

We can express 
the probability of relative primality in terms of the Euler phi function $\phi(n)$;  here again,  the order oscillates
(that is, $\frac 1n \phi(n)$ has multiple accumulation points---one for the primes and another for the powers of two, for instance), while the average order 
$\frac 1{n^2}[\phi(1)+\cdots + \phi(n)] $ converges.  With respect to a Cayley graph for 
$\Z^2$, this means
that sphere-averages do not exist, whereas ball-averages tend to $6/\pi^2$.

Let us draw the contrast between this situation and that of the size-like functions covered by Theorem~\ref{sphere-asymp}.
In this  example, the function $f$ that is the indicator for the relative
primality of the coordinates is coarsely homogeneous of order zero, but not asymptotically 
homogeneous, so Theorem~\ref{sphere-asymp} does not apply.  Furthermore, for size-like functions, unlike for 
relative primality, the growth order ($k$) of the sphere-average is the same as for the 
ball-average.  But for size-like functions on groups, we will not only achieve overall counting results, but also distribution results 
which count points in every direction, as illustrated in Figure~\ref{distribution}.

\begin{figure}[ht]
\hspace*{-.8in}\begin{tikzpicture}[scale=.85]
\def\background{\draw [pink] (3,0)--(0,-3)--(-3,0)--(0,3)--cycle;
\draw [blue,fill=blue!20,opacity=.3] (0,0)--(60:3) arc (60:70:3)--cycle;}

{ 
\background
 \begin{scope} [scale=1/2] 
  \foreach\b in {0,90,180,270}
  {\begin{scope}[rotate=\b]
    \foreach \a in {(6,0), (1,0)}
      \fill \a circle (2pt*1.00000000000000);
  \end{scope}}
  \end{scope}
}

 {  
 \begin{scope}[xshift=6.5cm]
\background
 \begin{scope} [scale=1/12] 
  \foreach\b in {0,90,180,270}
  {\begin{scope}[rotate=\b]
    \foreach \a in {(36,0), (31,0), (30,6), (30,1), (29,0), (26,0), (25,6), (25,1), (24,12), (24,7), (23,6), (24,5), (24,2), (23,1), (22,0), (21,0), (20,6), (20,1), (19,12), (19,7), (19,5), (19,2), (18,18), (18,13), (17,12), (18,11), (18,8), (17,7), (16,6), (17,5), (18,4), (18,3), (17,2), (16,1), (15,6), (15,1), (14,12), (14,7), (14,5), (14,2), (13,18), (13,13), (13,11), (13,8), (13,4), (13,3), (12,24), (12,19), (11,18), (12,17), (12,14), (11,13), (10,12), (11,11), (12,10), (12,9), (11,8), (10,7), (10,5), (11,4), (11,3), (10,2), (9,12), (9,7), (9,5), (9,2), (8,18), (8,13), (8,11), (8,8), (8,4), (8,3), (7,24), (7,19), (7,17), (7,14), (7,10), (7,9), (6,30), (6,25), (5,24), (6,23), (6,20), (5,19), (4,18), (5,17), (6,16), (6,15), (5,14), (4,13), (4,11), (5,10), (5,9), (4,8), (4,4), (4,3), (3,18), (3,13), (3,11), (3,8), (3,4), (3,3), (2,24), (2,19), (2,17), (2,14), (2,10), (2,9), (1,30), (1,25), (1,23), (1,20), (1,16), (1,15)}
      \fill \a circle (2pt*2.44948974278318);
  \end{scope}}
  \end{scope}\end{scope}
}

{\begin{scope}[xshift=13cm]
\background
  \begin{scope} [scale=1/40] 
  \foreach\b in {0,90,180,270}
  {\begin{scope}[rotate=\b]
    \foreach \a in {(120,0), (115,0), (114,6), (114,1), (113,0), (110,0), (109,6), (109,1), (108,12), (108,7), (107,6), (108,5), (108,2), (107,1), (106,0), (105,0), (104,6), (104,1), (103,12), (103,7), (103,5), (103,2), (102,18), (102,13), (101,12), (102,11), (102,8), (101,7), (100,6), (101,5), (102,4), (102,3), (101,2), (100,1), (99,6), (99,1), (98,12), (98,7), (98,5), (98,2), (97,18), (97,13), (97,11), (97,8), (97,4), (97,3), (96,24), (96,19), (95,18), (96,17), (96,14), (95,13), (94,12), (95,11), (96,10), (96,9), (95,8), (94,7), (94,5), (95,4), (95,3), (94,2), (93,12), (93,7), (93,5), (93,2), (92,18), (92,13), (92,11), (92,8), (92,4), (92,3), (91,24), (91,19), (91,17), (91,14), (91,10), (91,9), (90,30), (90,25), (89,24), (90,23), (90,20), (89,19), (88,18), (89,17), (90,16), (90,15), (89,14), (88,13), (88,11), (89,10), (89,9), (88,8), (88,4), (88,3), (87,18), (87,13), (87,11), (87,8), (87,4), (87,3), (86,24), (86,19), (86,17), (86,14), (86,10), (86,9), (85,30), (85,25), (85,23), (85,20), (85,16), (85,15), (84,36), (84,31), (83,30), (84,29), (84,26), (83,25), (82,24), (83,23), (84,22), (84,21), (83,20), (82,19), (82,17), (83,16), (83,15), (82,14), (82,10), (82,9), (81,24), (81,19), (81,17), (81,14), (81,10), (81,9), (80,30), (80,25), (80,23), (80,20), (80,16), (80,15), (79,36), (79,31), (79,29), (79,26), (79,22), (79,21), (78,42), (78,37), (77,36), (78,35), (78,32), (77,31), (76,30), (77,29), (78,28), (78,27), (77,26), (76,25), (76,23), (77,22), (77,21), (76,20), (76,16), (76,15), (75,30), (75,25), (75,23), (75,20), (75,16), (75,15), (74,36), (74,31), (74,29), (74,26), (74,22), (74,21), (73,42), (73,37), (73,35), (73,32), (73,28), (73,27), (72,48), (72,43), (71,42), (72,41), (72,38), (71,37), (70,36), (71,35), (72,34), (72,33), (71,32), (70,31), (70,29), (71,28), (71,27), (70,26), (70,22), (70,21), (69,36), (69,31), (69,29), (69,26), (69,22), (69,21), (68,42), (68,37), (68,35), (68,32), (68,28), (68,27), (67,48), (67,43), (67,41), (67,38), (67,34), (67,33), (66,54), (66,49), (65,48), (66,47), (66,44), (65,43), (64,42), (65,41), (66,40), (66,39), (65,38), (64,37), (64,35), (65,34), (65,33), (64,32), (64,28), (64,27), (63,42), (63,37), (63,35), (63,32), (63,28), (63,27), (62,48), (62,43), (62,41), (62,38), (62,34), (62,33), (61,54), (61,49), (61,47), (61,44), (61,40), (61,39), (60,60), (60,55), (59,54), (60,53), (60,50), (59,49), (58,48), (59,47), (60,46), (60,45), (59,44), (58,43), (58,41), (59,40), (59,39), (58,38), (58,34), (58,33), (57,48), (57,43), (57,41), (57,38), (57,34), (57,33), (56,54), (56,49), (56,47), (56,44), (56,40), (56,39), (55,60), (55,55), (55,53), (55,50), (55,46), (55,45), (54,66), (54,61), (53,60), (54,59), (54,56), (53,55), (52,54), (53,53), (54,52), (54,51), (53,50), (52,49), (52,47), (53,46), (53,45), (52,44), (52,40), (52,39), (51,54), (51,49), (51,47), (51,44), (51,40), (51,39), (50,60), (50,55), (50,53), (50,50), (50,46), (50,45), (49,66), (49,61), (49,59), (49,56), (49,52), (49,51), (48,72), (48,67), (47,66), (48,65), (48,62), (47,61), (46,60), (47,59), (48,58), (48,57), (47,56), (46,55), (46,53), (47,52), (47,51), (46,50), (46,46), (46,45), (45,60), (45,55), (45,53), (45,50), (45,46), (45,45), (44,66), (44,61), (44,59), (44,56), (44,52), (44,51), (43,72), (43,67), (43,65), (43,62), (43,58), (43,57), (42,78), (42,73), (41,72), (42,71), (42,68), (41,67), (40,66), (41,65), (42,64), (42,63), (41,62), (40,61), (40,59), (41,58), (41,57), (40,56), (40,52), (40,51), (39,66), (39,61), (39,59), (39,56), (39,52), (39,51), (38,72), (38,67), (38,65), (38,62), (38,58), (38,57), (37,78), (37,73), (37,71), (37,68), (37,64), (37,63), (36,84), (36,79), (35,78), (36,77), (36,74), (35,73), (34,72), (35,71), (36,70), (36,69), (35,68), (34,67), (34,65), (35,64), (35,63), (34,62), (34,58), (34,57), (33,72), (33,67), (33,65), (33,62), (33,58), (33,57), (32,78), (32,73), (32,71), (32,68), (32,64), (32,63), (31,84), (31,79), (31,77), (31,74), (31,70), (31,69), (30,90), (30,85), (29,84), (30,83), (30,80), (29,79), (28,78), (29,77), (30,76), (30,75), (29,74), (28,73), (28,71), (29,70), (29,69), (28,68), (28,64), (28,63), (27,78), (27,73), (27,71), (27,68), (27,64), (27,63), (26,84), (26,79), (26,77), (26,74), (26,70), (26,69), (25,90), (25,85), (25,83), (25,80), (25,76), (25,75), (24,96), (24,91), (23,90), (24,89), (24,86), (23,85), (22,84), (23,83), (24,82), (24,81), (23,80), (22,79), (22,77), (23,76), (23,75), (22,74), (22,70), (22,69), (21,84), (21,79), (21,77), (21,74), (21,70), (21,69), (20,90), (20,85), (20,83), (20,80), (20,76), (20,75), (19,96), (19,91), (19,89), (19,86), (19,82), (19,81), (18,102), (18,97), (17,96), (18,95), (18,92), (17,91), (16,90), (17,89), (18,88), (18,87), (17,86), (16,85), (16,83), (17,82), (17,81), (16,80), (16,76), (16,75), (15,90), (15,85), (15,83), (15,80), (15,76), (15,75), (14,96), (14,91), (14,89), (14,86), (14,82), (14,81), (13,102), (13,97), (13,95), (13,92), (13,88), (13,87), (12,108), (12,103), (11,102), (12,101), (12,98), (11,97), (10,96), (11,95), (12,94), (12,93), (11,92), (10,91), (10,89), (11,88), (11,87), (10,86), (10,82), (10,81), (9,96), (9,91), (9,89), (9,86), (9,82), (9,81), (8,102), (8,97), (8,95), (8,92), (8,88), (8,87), (7,108), (7,103), (7,101), (7,98), (7,94), (7,93), (6,114), (6,109), (5,108), (6,107), (6,104), (5,103), (4,102), (5,101), (6,100), (6,99), (5,98), (4,97), (4,95), (5,94), (5,93), (4,92), (4,88), (4,87), (3,102), (3,97), (3,95), (3,92), (3,88), (3,87), (2,108), (2,103), (2,101), (2,98), (2,94), (2,93), (1,114), (1,109), (1,107), (1,104), (1,100), (1,99)}
      \fill \a circle (2pt*4.47213595499958);
  \end{scope}}
  \end{scope}\end{scope}
}
\end{tikzpicture}
\caption{For the generating set $S=\pm\{ 6\E_1, \E_1, 6\E_2, \E_2\}$, spheres of radius $n=1$, $6$, and $20$ are shown relative
to the dilated limit shape $nL$.  As $n\to\infty$, the proportion of points in $S_n$ that lie in any fixed direction is 
converging. \label{distribution}}
\end{figure}
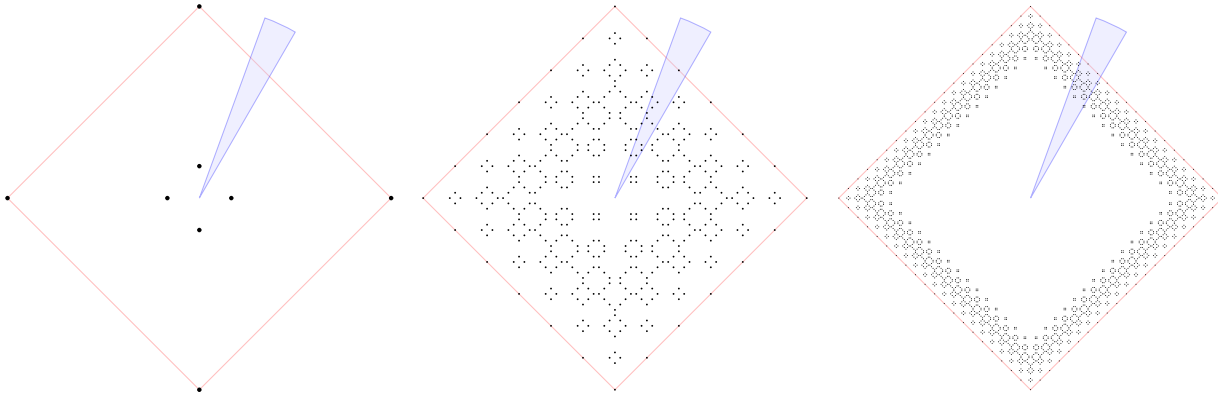

We will find below that 
$$\# S_n \sim \#(\Z^d\cap\Delta_nL) \sim \Vol(\Delta_nL)=dn^{d-1}\sdot\Vol(Q).$$
The first comparison here reduces the sphere counting problem to the lattice point counting problem
(counting integer lattice points in a geometric annular region between balls in the norm $\normL{\cdot}$), and the second comparison
is a solution to the lattice point counting problem.
Counting problems like this can be studied in many other kinds of metric spaces.
In \cite{heis}, for instance, the lattice point counting problem is solved in the Heisenberg group, and partial results are offered on the 
sphere counting problem.

As a final note, 
our findings that $\#S_n=dVn^{d-1}+O(n^{d-2})$ and $\#B_n = Vn^d + O(n^{d-1})$ for any finite generating set
give stronger statements than 
those surveyed in \cite{delaharpe}, where it is stated that $\lim (\#B_n)/n^d =V$ for the standard
generating set and other special cases.   
Beyond this we are not aware of results in the literature prior to the present work concerning asymptotics of
the spherical growth function.  On the other hand, there was a surge of interest in the {\em growth series} of groups
in the 1980s and 1990s.  These are the generating functions for the growth function and spherical growth function,
and they are known to be rational functions for several classes of groups.
  In \cite{benson}, Benson proves that the spherical growth series of any virtually abelian group is a rational function,
and provides an algorithm for computing this precisely.  
But  the fact of rationality does not give information about the asymptotics of the spherical growth function itself.
As a powerful illustration, consider Cannon's example (see \cite[Example VI.A.9]{delaharpe}):  with appropriate generators,  a certain virtually abelian group (the orientation-preserving part of the Euclidean reflection group in the equilateral triangle) 
has a spherical growth function which 
oscillates between two linear functions, and so never even becomes monotone!  Thus growth asymptotics such 
as we compute here are not implied by rationality of the growth series.

\section{The limit metric}
\label{sec:dist}

When considering presentations $(\Z^d,S)$, we will assume throughout that $S$ is 
symmetric, so that $S=-S=S_1$ is also the sphere of radius 1 in the group.  

First let us consider $G=\Z$.  With 
any finite generating set, the spheres of large radius are divided into a positive part
and a negative part, each of uniformly bounded diameter in $\R$.  In particular, if $a$ is the largest positive element in the 
generating set, then the most efficient spelling of a very large integer uses almost exclusively the letter $a$; in the 
language we will develop below, $\pm a$ are the only {\em significant generators}.  
Let $K$ be the smallest value such that the ball of radius $K$ in the word metric
contains all of the integers $-a\le m\le a$ (so that $K$ is a constant depending on $S$).  
The ball of radius $K$ includes all positive integers up to $a$.  The ball of radius $K+1$ then includes all positive integers up to $2a$,
since integers between $a$ and $2a$ can be obtained by adding the generator $a$.
Continuing, we see that, for any $n> K$, the ball of radius $n-1$ includes all positive integers up to $(n-K)a$.
Thus the sphere of radius $n$ is totally contained in the interval $\bigl( (n-K)a, \ na\bigr]$.  
That means that the positive real numbers $\frac 1n S_n$ are contained in the interval $\bigl( a (1-K/n), \ a\bigr]$, 
and therefore $\frac 1n S_n \to \{\pm a\}$.  This depends on the choice of generating set, but in fact only on its
convex hull in $\R$.  

For $G=\Z^d$ the situation is similar.
We will study word-length from a geometric point of view.
Let $S\subset\Z^d$ be a fixed finite set of generators.  
We adopt additive notation, so that every element of \(\Z^d\)
has a representative in the form $\W=\alpha_1\A_1+\alpha_2\A_2+\cdots+ \alpha_r \A_r$ where 
$S=\{\pm \A_1,\ldots,\pm \A_r\}$ and $\alpha_i\in\Z$.
Let $|\W|$ denote the length of $\W$ in the word metric, or the minimal $\sum |\alpha_i|$ 
over all representatives as above.
A spelling is called a \emph{geodesic representative} (or a {\em geodesic spelling}) 
if it realizes this minimum, since these spellings
correspond to geodesic paths in the Cayley graph.

Let $Q$ be the convex hull in $\R^d$ of the generating set  $S$, and let $L$ denote its boundary.
By construction, $L$ is a centrally symmetric convex polyhedron.
Let $\normL{\cdot}$ denote the Minkowski norm on $\R^d$ induced by $L$:  this is the unique norm 
for which $L$ is the unit sphere.  Namely, for $\X\in\R^d$,
$\normL{\X}$ equals the unique $\lambda\ge 0$ such that 
$\X\in \lambda L$.  

For any set $M$, let $\Delta M=\{t\X : t\ge 0, \X\in M\}$ 
be the infinite cone on $M$ from the origin with respect to dilation.
Then let 
$\Delta_k M= \{t \X : k-1< t \le k, \X \in M\}$ be the annular region from the $(k-1)$st to the $k$th
dilation,  so that the cone $\hat M$ from $M$ to the origin is equal to $\Delta_1M\cup\{0\}$.
For $\sigma$ a codimension-1 face of $L$, we will call $\Delta \sigma$ the {\em sector} associated to 
$\sigma$.
$\Vol$ denotes the Lebesgue measure on $\R^d$ (or $\Area$ if $d=2$).
The extreme points of $Q$ are called {\em significant generators.}
These are necessarily elements of the generating set, and it will turn out that many 
of the properties we study in this paper depend only on this subset of $S$.
In particular, we will see shortly that the significant  generators completely determine the averages of size-like functions in the word metric.
The first basic observation is that $L$ encodes the large-scale geometry of the group with this generating
set.

\begin{lemma}
An element $\A\in S$ is on the polytope $L$ if and only if $n\A$ is geodesic for every \(n\in\Z\).
An element $\A\in S$ is an extreme point of $Q$ (or, equivalently, a vertex of $L$) 
if and only if $n\A$ is uniquely geodesic for every \(n\in\Z\).
\end{lemma}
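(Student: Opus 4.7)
The plan is to derive both statements from a single convex-geometric input: supporting linear functionals for $Q=\CHull(S)$. Any point $\A\in L$ admits a linear functional $\phi:\R^d\to\R$ with $\phi(\A)=1$ and $\phi\le 1$ on $Q$, and $\phi$ can be chosen strictly supporting ($\phi(\X)<1$ for $\X\in Q\setminus\{\A\}$) precisely when $\A$ is a vertex. By central symmetry of $S$ it suffices to handle $n>0$; the case $n=0$ is trivial.

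For the forward implications, I would group any spelling of $n\A$ as $n\A=\sum_{\B\in S} c_\B\sdot \B$ with integer $c_\B\ge 0$, so its word length equals $\sum_\B c_\B$. If $\A\in L$, applying the supporting functional yields
$$ n = \phi(n\A) = \sum_\B c_\B\phi(\B) \le \sum_\B c_\B, $$
showing the word length is at least $n$; this matches the trivial upper bound from spelling $n\A$ as $\A+\cdots+\A$, so $n\A$ is geodesic. If moreover $\A$ is a vertex, $\phi(\B)<1$ for $\B\in S\setminus\{\A\}$, and equality in the displayed inequality forces $c_\B=0$ for all $\B\ne\A$, giving uniqueness.

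For the converses, the approach is to build a spelling of $n\A$ that witnesses failure of geodesicity or of uniqueness. If $\A\notin L$, interiority gives $(1+\delta)\A\in Q$ for some $\delta>0$, and Carath\'eodory writes $\A=\sum_i t_i\V_i$ as a nonnegative combination of vertices $\V_i$ of $Q$ (all lying in $S$) with $\sum_i t_i=1/(1+\delta)<1$. If instead $\A\in L$ is not a vertex, $\A$ lies in the relative interior of some positive-dimensional face $F$, hence is a strict convex combination $\A=\sum_i t_i\V_i$ of the vertices $\V_i$ of $F$, none of which equals $\A$. In either case, because $\A$ and the $\V_i$ are integer vectors and the constraints are rational, the $t_i$ can be taken rational; multiplying through by a common denominator $n$ produces an integer spelling $n\A=\sum_i (nt_i)\V_i$ of length $\sum_i nt_i$. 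In the first case this length is $n/(1+\delta)<n$, contradicting geodesicity; in the second it equals $n$ but uses only letters $\ne\A$, contradicting uniqueness.

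The main subtle step is the passage from real to integer coefficients: the convex-geometric constructions naturally produce real $t_i$, whereas a word in $\Z^d$ requires integer coefficients summing (against the $\V_i$) to $n\A$. This is handled by the rationality of the defining linear system and density of rational points in its nonempty open feasible region, after which a single dilation clears all denominators. Everything else is a direct application of the supporting hyperplane theorem.
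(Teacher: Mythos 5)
Your proof is correct, and it takes a recognizably different route from the paper's on the forward implications while converging to the same constructions on the converses. For the converses, both arguments do the same thing: write $\A$ as a rational sub-convex (resp.\ convex) combination of vertices of $Q$ and clear denominators to get a shorter (resp.\ alternative) integer spelling of $n\A$; here you are actually more careful than the paper about the one step that needs care, since the paper asserts rationality of the coefficients ``since $\A,\B,\C$ have integer coordinates,'' which is immediate for a segment in the plane but in general dimension wants exactly the rational-polyhedron/LP justification you supply. For the forward implications the paper argues via the Minkowski gauge --- a sub-geodesic spelling gives $\normL{\A}\le\sum\frac{\alpha_i}{n}\normL{\A_i}<1$, forcing $\A$ interior --- and then dispatches the uniqueness claim in a single terse sentence. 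You instead fix a supporting functional $\phi$ with $\phi(\A)=1$ and $\phi\le1$ on $Q$ and read off both lower bounds at once: $n=\sum_\B c_\B\,\phi(\B)\le\sum_\B c_\B$ gives geodesicity, and strictness of $\phi$ away from $\A$ (available exactly when $\A$ is a vertex, vertices of polytopes being exposed) forces every length-$n$ spelling to use only the letter $\A$, giving uniqueness. These are dual formulations of the same convexity fact, but the functional version buys a genuinely cleaner and more complete treatment of the ``uniquely geodesic iff extreme'' half, which the paper only sketches. The single point worth making explicit in your write-up is that the alternative length-$n$ spelling produced in the non-vertex case is indeed geodesic because the first half already gives $|n\A|=n$ for $\A\in L$; you use this implicitly.
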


\begin{proof}  We give the proof for $d=2$ for clarity.
Suppose $\A\in S$ is an interior point of $Q$, and suppose it lies in the sector determined by the 
extreme points $\B,\C\in S$.  Then there is a unique positive multiple $\alpha \A$ that lies on the 
segment between $\B$ and $\C$, so $\alpha \A = \beta \B + \gamma \C$, with $\alpha>1$, $\beta+\gamma=1$,
with necessarily rational coefficients since $\A,\B,\C$ have integer coordinates.
But then by clearing common denominators, we have an integer multiple of $\A$ expressed in 
terms of $\B$ and $\C$ with a strictly smaller wordlength.  This shows that $n\A$ cannot be geodesic for
all $n$.

Now suppose $n\A$ is not geodesic for some $n\ge 1$.  Then we can write
$n\A = \sum \alpha_i \A_i$ with $\sum \alpha_i<n$.  Assume further that $n$ is the smallest such value,
so that $\A$ itself does not appear in this spelling.
Then
$$	\normL{\A}  \le \sum \frac{\alpha_i}{n} \left\| \A_i \right\|_L <1, $$
showing that $\A$ is interior to $Q$.

For the word $n\A$, any alternative spelling with the same length expresses $\A$ as a
convex combination of other generators, and such an expression exists if and only if $\A$
is not extreme.

(The proof is the same in arbitrary dimension:  replace $\B,\C$ with the extreme points $\B_1,\ldots,\B_n$ in a cell 
of a triangulation.)
\end{proof}

Now we establish that the word metric limits to a norm, and that they differ
by a bounded additive amount.  
As a consequence, the spheres in the word metric, once normalized, converge to a 
limit shape.
This is a small special case of the theory for finitely-generated nilpotent
groups and, more generally, lattices in Lie groups of polynomial growth (see Pansu
and Breuillard \cite{pansu,breuillard}).
Here we give an elementary proof in terms of the combinatorial group theory and Euclidean
geometry.

\begin{lemma}\label{ineq1}
For $\W\in\Z^d$, $\normL{\W}\le |\W|$.
\end{lemma}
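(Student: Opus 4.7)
The plan is to exploit the triangle inequality and positive homogeneity of the Minkowski norm $\normL{\cdot}$, together with the defining property that every generator $\A\in S$ lies on the unit sphere or inside the unit ball of this norm.

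First I would take any geodesic spelling of $\W$, say
$$\W = \alpha_1 \A_1 + \alpha_2 \A_2 + \cdots + \alpha_r \A_r,$$
where $S = \{\pm \A_1,\ldots,\pm \A_r\}$ and $\sum_i |\alpha_i| = |\W|$. Since each $\A_i$ is an element of the generating set $S$, it belongs to the convex hull $Q$, so $\A_i \in Q$ means $\normL{\A_i} \le 1$; and similarly $\normL{-\A_i} \le 1$ because $Q$ is centrally symmetric. By the triangle inequality and positive homogeneity of the norm,
$$\normL{\W} \;\le\; \sum_{i=1}^r \normL{\alpha_i \A_i} \;=\; \sum_{i=1}^r |\alpha_i|\,\normL{\A_i} \;\le\; \sum_{i=1}^r |\alpha_i| \;=\; |\W|.$$

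That is the whole argument; there is essentially no obstacle. The only thing worth being careful about is noting why $\normL{\cdot}$ is a genuine norm (so that the triangle inequality and homogeneity are available): this follows because $Q = \CHull(S)$ is a compact, convex, centrally symmetric subset of $\R^d$ containing a neighborhood of the origin (since $S$ generates $\Z^d$, it spans $\R^d$, so $Q$ has nonempty interior), which is exactly the standard hypothesis for the Minkowski functional of $Q$ to define a norm whose unit ball is $Q$ and whose unit sphere is $L$.
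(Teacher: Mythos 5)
Your argument is correct and is essentially the paper's proof: the paper phrases it as an induction on word length $n$, peeling off one generator at a time via $\W=\W'+\A$, but unrolling that induction gives exactly your direct application of the triangle inequality and homogeneity to a geodesic spelling, using $\normL{\A}\le 1$ for $\A\in S\subseteq Q$. Your added remark on why the Minkowski functional of $Q$ is a genuine norm is a reasonable bit of extra care that the paper leaves implicit.
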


\begin{proof}
We prove that $\max_{\W\in S_n}\normL{\W}\le n$ for all $n\ge 1$ by induction on $n$.  If
$n=1$, this is immediate from the definition of $L$.  When $n>1$, we can always write
$\W\in S_n$ as $\W=\W'+\A$ where $\W'\in S_{n-1}$ and $\A\in S$ .  But then $\normL{\W} \le
\normL{\W'}+\normL{\A} \le n.$ \end{proof}

We say that $\V\in\Z^d$ has a \textit{simple spelling} in terms of the generating set if there is
a geodesic spelling which uses only significant generators.  We will denote by $\eP_n$ the set of
points in $S_n$ which have a simple spelling.

\begin{defn}  Fix a triangulation of $L$, and $n\ge 1$.
For $0\le k \le n$, let 
$$
	\eP_n:=\left\{ \sum a_i \V_i :
		\quad a_i ~\hbox{\rm non-negative integers},
		\;\{\V_i\} ~\hbox{\rm bound a simplex},
		\;\textstyle\sum a_i=n   \right\}
$$
be the set of non-negative integer combinations of the extreme points of simplices, with weights summing to $n$.
\end{defn}

Note that if $\P\in \eP_n$, then it lies in $nL\cap S_n$.  This is because
$\P$ belongs to the facet of $nL$ that its extreme points $\V_i$ do, so $\|\P\|_L=n$.  Its wordlength is at
most $n$ because we have a spelling of length $n$, so Lemma~\ref{ineq1} ensures that the
wordlength is equal to $n$.  Note also that each $\P\in \eP_n$ is at distance two in the word metric
from the other elements that differ by $\V_i-\V_j$, and therefore $\eP_n$ is $2$-dense in $nL$ with respect to the $L$ metric as well.

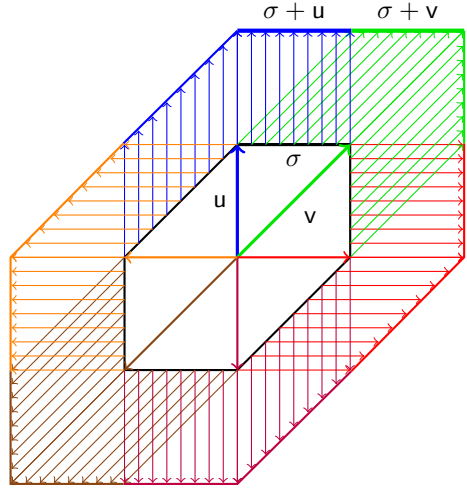
\begin{figure}[ht]
\begin{tikzpicture}[scale=1.5]

\draw [thick] (-1,0)--(-1,-1)--(0,-1)--(1,0)--(1,1)-- node [below] {$\sigma$} (0,1)--cycle;
\draw [very thick] (1,1)--(0,1);
\def\x{\y/8}
\draw [->,blue,very thick] (0,0)-- node [left,black] {$\U$} (0,1);
\draw [->,green!90!black,very thick] (0,0)-- node [below right,black] {$\V$} (1,1);
\draw [->,red,thick] (0,0)--(1,0);
\draw [->,purple,thick] (0,0)-- (0,-1);
\draw [->,brown,thick] (0,0)--  (-1,-1);
\draw [->,orange,thick] (0,0)--  (-1,0);
\foreach \y in {0,...,8}
{\draw [->,blue] (-1+\x,\x) -- +(0,1);
\draw [->,blue] (\x,1)--+(0,1);
\draw [->,green!90!black] (\x,1)--+(1,1);
\draw [->,green!90!black] (1,1-\x)--+(1,1);
\draw [->,red] (1,1-\x)--+(1,0);
\draw [->,red] (\x,\x-1)--+(1,0);
\draw [->,purple] (\x,\x-1)--+(0,-1);
\draw [->,purple] (-\x,-1)--+(0,-1);
\draw [->,brown] (-\x,-1)--+(-1,-1);
\draw [->,brown] (-1,-\x)--+(-1,-1);
\draw [->,orange] (-1,-\x)--+(-1,0);
\draw [->,orange] (-1+\x,\x) -- +(-1,0);}
\draw [ultra thick,blue] (0,2.01)-- node [above,black] {$\sigma+\U$} (1.01,2.01);
\draw [thick,blue] (-1.01,1.01)--(0,2.01);
\draw [ultra thick,green!90!black]  (1.01,2.01)-- node [above,black] {$\sigma+\V$} (2.01,2.01);
\draw [thick,green!90!black]  (2.01,2.01)--(2.01,1.01);
\draw [thick, red] (2.01,1.01)--(2.01,0)--(1.01,-1.01);
\draw [thick, purple] (1.01,-1.01)--(0,-2.01)--(-1.01,-2.01);
\draw [thick, brown] (-1.01,-2.01)--(-2.01,-2.01)--(-2.01,-1.01);
\draw [thick, orange] (-2.01,-1.01) -- (-2.01,0)--(-1.01,1.01);
\end{tikzpicture}
\caption{This figure shows that $Q+S=2Q$.}\label{tiling-figure}
\end{figure}

\begin{lemma}[Tiles]\label{tiling}
For any whole number $n\ge 2$, \quad $(n-1)Q+S=nQ.$
\end{lemma}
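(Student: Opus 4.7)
The plan is to handle the two inclusions separately. The inclusion $(n-1)Q + S \subseteq nQ$ is immediate: since $S \subseteq Q$ and $aQ + bQ = (a+b)Q$ for every convex $Q$ containing the origin and $a,b\ge0$ (any point $(n-1)y_1 + y_2$ equals $n$ times the convex combination $\tfrac{n-1}{n}y_1 + \tfrac{1}{n}y_2 \in Q$), we get
\[ (n-1)Q + S \;\subseteq\; (n-1)Q + Q \;=\; nQ. \]

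For the reverse inclusion $nQ \subseteq (n-1)Q + S$, I would induct on $n$, with the base case $n = 2$ carrying all the substance. Granting $Q + S = 2Q$, associativity of the Minkowski sum delivers the inductive step for $n \ge 3$:
\[ (n-1)Q + S \;=\; (n-2)Q + (Q + S) \;=\; (n-2)Q + 2Q \;=\; nQ. \]

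For the base case $2Q \subseteq Q + S$, I take $x \in 2Q$ and seek $s \in S$ with $x - s \in Q$; equivalently, a generator within Minkowski distance $1$ of $x$. If $x = 0$, any $s \in S$ works, since $-s \in S \subseteq Q$. Otherwise, the ray from the origin through $x$ meets $L$ in some simplex $\sigma$ of the triangulation, with vertices $v_1,\ldots,v_k \in S$; writing $x/\normL{x} = \sum_i \mu_i v_i$, we have $x = \sum_i \alpha_i v_i$ with $\alpha_i = \normL{x}\mu_i \ge 0$ and $\sum_i \alpha_i = \normL{x} \le 2$. Whenever some $\alpha_j \ge 1$, taking $s = v_j$ succeeds, because
\[ x - v_j \;=\; \sum_{i \ne j}\alpha_i v_i + (\alpha_j - 1)v_j \]
is a non-negative combination of the $v_i$ with total weight $\normL{x} - 1 \le 1$, which is a convex combination of $0, v_1,\ldots,v_k$ and therefore lies in $Q$.

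The main obstacle is the complementary regime where every $\alpha_j < 1$, forcing $\normL{x} < k$; in that case the naive calculation in $\hat\sigma$ breaks down because $x - v_j$ need not be a non-negative combination of the vertices of $\sigma$, although $\normL{x - v_j} \le 1$ may still hold via a representation using vertices outside $\sigma$. Here I would exploit the central symmetry $-S = S$ and the combinatorial tiling picture of Figure~\ref{tiling-figure}, in which the $|S|$ translates $\{v_j + Q\}$ are displayed assembling to cover $2Q$: the right generator $v_j$ to subtract lives in a neighbouring or antipodal simplex, and one verifies directly that $x - v_j \in Q$. Turning this covering picture into a uniform argument across all cones is the technical heart of the proof.
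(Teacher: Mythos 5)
You have correctly isolated the crux, but the gap you flag at the end is genuine and, in fact, cannot be closed: the regime where every $\alpha_j<1$ is not a technicality but the place where the statement itself fails once $d\ge 3$. Take $\Z^3$ with the standard generators $S=\{\pm\E_1,\pm\E_2,\pm\E_3\}$, so that $Q$ is the octahedron and $\normL{\cdot}$ is the $\ell_1$-norm, and consider $\X=(\tfrac23,\tfrac23,\tfrac23)\in 2Q$. For every $s\in S$ one computes $\normL{\X-s}\ge\tfrac53>1$, so $\X\notin Q+S$ and $2Q\not\subseteq Q+S$. In your coordinates this is exactly the bad case: $\X=\tfrac23(\E_1+\E_2+\E_3)$ lies over the facet $\sigma=\CHull(\{\E_1,\E_2,\E_3\})$ with all $\alpha_j=\tfrac23<1$, and no generator---neither a vertex of $\sigma$ nor one from a neighbouring or antipodal cone---rescues it. So the covering picture you hoped to invoke does not exist in rank $\ge 3$: the translates of a $(d-1)$-simplex by its own vertices cover only the corner pieces of the doubled simplex and miss the middle (for a triangle, the central inverted triangle). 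Note that this is precisely the step the paper's own proof asserts without justification (``those translates overlap, covering $n\sigma$''), so the published argument, like yours, is only valid when the facets are segments, i.e.\ for $d\le 2$; what the later applications actually need is a lattice-point statement such as $nQ\cap\Z^d\subseteq\bigl((n-1)Q\cap\Z^d\bigr)+S$, which requires a separate argument.

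On the positive side, your strategy (easy inclusion, reduction to $n=2$ by associativity of Minkowski sums, then a case split on the barycentric coordinates) does yield a complete and arguably cleaner proof in rank two, and the symmetric trick you gesture at is all that is missing there. For $d=2$ a facet has $k=2$ vertices, so the larger coefficient satisfies $\alpha_1\ge\lambda/2$ where $\lambda=\normL{\X}\le2$; if $\alpha_1\ge1$ your computation applies, and if $\alpha_1<1$ then
\[
\X-\V_1=(1-\alpha_1)(-\V_1)+\alpha_2\V_2,\qquad (1-\alpha_1)+\alpha_2=1+\lambda-2\alpha_1\le 1,
\]
a non-negative combination of points of $Q$ with total weight at most $1$, hence a point of $Q$ since $0\in Q$. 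This fails for $k\ge3$ because one can only guarantee $\alpha_j\ge\lambda/k$, which is the source of the counterexample above. As written, then, your proposal does not prove the lemma for general $d$---but neither does the paper, and the statement needs to be restricted (to $d\le2$, or to lattice points with a word-length argument) before any proof can succeed.
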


\begin{proof}
We know that $(n-1)Q+Q=nQ$, by convexity of $Q$.  Now consider $(n-1)Q+S$.  This contains $nS$, the extreme points of $nQ$.  
Consider $\sigma$, a simplex in the (fixed) triangulation of $L$.  For a particular $i$, the set $(n-1)\sigma+\V_i$ is a convex set (a copy of $(n-1)\sigma$).  
If the extreme points of $\sigma$ are $\{\V_i\}$, then those translates overlap, covering $n\sigma$.  Thus 
$(n-1)L+S$ includes $nL$.
(See Figure~\ref{tiling-figure}.)  But since each cone $\hat \sigma$ is contained in $Q$, one can similarly cone off to obtain the desired result.
That is, 
$$(n-1)Q+S=(n-1)\hat L + S = \left(\bigcup(n-1)\hat\sigma\right) +S = \bigcup \left( (n-1)\hat\sigma + S \right) = nQ.$$
\end{proof}

\begin{lemma}[Bounded difference]\label{bounded-diff}
There is a constant $K=K(S)\ge 1$ such that for all $\W\in\Z^d$,
$$\normL{\W} \le |\W| < \normL{\W} +K.$$
That is, $S_n$ is contained in the annular region between $(n-K)L$ and $nL$.
\end{lemma}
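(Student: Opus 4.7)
The first inequality $\normL{\W} \le |\W|$ is exactly Lemma~\ref{ineq1}, so the plan is to focus entirely on the upper bound. The strategy is to take the constant $K$ flagged in the introduction, namely
\[
K := \max\bigl\{ |\V| : \V \in Q \cap \Z^d \bigr\},
\]
which is a finite positive integer (it is at least $1$ because $S \subseteq Q \cap \Z^d$ and each generator has word length $1$), and then peel off one generator at a time from $\W$ using the Tiles lemma until only a lattice point in $Q$ remains.

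More precisely, here is the sequence of steps. If $\W = 0$ there is nothing to prove. Otherwise, set $n := \lceil \normL{\W} \rceil \ge 1$, so that $\W \in nQ \cap \Z^d$ by the definition of the Minkowski norm. If $n = 1$, then $\W \in Q \cap \Z^d$, so $|\W| \le K < K + \normL{\W}$, as required. If $n \ge 2$, I iterate Lemma~\ref{tiling}: from $(n-1)Q + S = nQ$ applied to $\W \in nQ$, there exist $\A_n \in S$ and $\W^{(1)} \in (n-1)Q$ with $\W = \W^{(1)} + \A_n$. Since $\W, \A_n \in \Z^d$, also $\W^{(1)} \in \Z^d$, and I can repeat the argument as long as the current polytope level is at least $2$. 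After exactly $n-1$ applications, I reach
\[
\W = \W' + \A_2 + \A_3 + \cdots + \A_n
\qquad\text{with}\qquad \W' \in Q \cap \Z^d, \;\; \A_i \in S.
\]
Therefore $|\W| \le |\W'| + (n-1) \le K + (n-1)$. Finally I observe that $n - 1 < \normL{\W}$ in every case: either $\normL{\W}$ is an integer equal to $n$, or $n - 1 < \normL{\W} < n$ by definition of the ceiling. Combining gives $|\W| < K + \normL{\W}$.

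The only place any care is needed is the transition from the set-theoretic identity $(n-1)Q + S = nQ$ in $\R^d$ to an assertion about lattice points: I need to know that the intermediate vectors $\W^{(j)}$ stay in $\Z^d$, but this is immediate because they are differences of lattice vectors. There is no serious obstacle; the proof is essentially a direct application of the Tiles lemma together with the observation that a lattice point in $Q$ has bounded word length. The second sentence of the lemma ("$S_n$ is contained in the annular region between $(n-K)L$ and $nL$") then follows at once by rewriting the two inequalities as $\normL{\W} \le n$ and $\normL{\W} > n - K$ for $\W \in S_n$.
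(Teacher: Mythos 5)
Your proposal is correct and follows essentially the same route as the paper: both take $K$ to be the maximum word length of a lattice point in $Q$ and iterate the Tiles lemma $(n-1)Q+S=nQ$ to reduce to that base case (the paper runs the induction upward through growing balls, you peel generators off $\W$ downward, which is the same argument). Your extra care about the intermediate points staying in $\Z^d$ and about the strict inequality via the ceiling is a welcome tightening of details the paper leaves implicit.
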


\begin{proof}
Set \(K=\max\{|Q\cap\Z^d|\}\).  This is the largest wordlength required to fill in the convex hull
of the generators; for example, in Figure~\ref{chessknight}, we find $K=3$.
(Note that $K$ can be arbitrarily large as the generating set $S$ varies, 
but it only depends on  $S$.)

The ball of radius $K$ then contains all of $Q\cap \Z^d$, and thus by the previous lemma we have that the ball of 
radius $K+1$ contains all of $2Q$, and so on until the ball of radius $n-1$ includes all lattice points in $(n-K)Q$.  
But since $B_n$ is contained in $nQ$, 
this precisely means that the sphere is contained in $nQ\setminus (n-K)Q$, as required.
\end{proof}

\begin{figure}[ht]
\begin{center}
\begin{tikzpicture}[scale=1/2]

  \tikzstyle{S_1} = [fill=blue!20,fill opacity=0.8]
  \tikzstyle{S_2} = [fill=pink!80,fill opacity=0.8]
    \tikzstyle{S_3} = [fill=green!80,fill opacity=0.8]
      \tikzstyle{S_4} = [fill=blue!20,fill opacity=0.8]

\foreach \a / \b in {3/10,2/20,1/30}
{\begin{scope}[scale=\a]
\draw [fill=gray!\b] (2,1)-- (1,2)-- (-1,2)--  (-2,1)--  (-2,-1)-- (-1,-2) --(1,-2)-- (2,-1)-- cycle;
\end{scope}}

\draw[fill=black] (0,0) circle (.2);

\foreach\b in {0,90,180,270}
{\begin{scope}[rotate=\b]
\foreach \a in { (1,2), (2,1)}
\draw[S_1] \a circle (.2);

\foreach \a  in {(1,1),(3,3),(2,0),(4,0),(3,1),(4,2),(1,3),(2,4)}
\draw[S_2] \a circle (.16);

\foreach \a  in {(1,0),(3,0),(5,0),(4,1),(6,1),(3,2),(5,2),(2,3),(4,3),(6,3),(1,4),(3,4),(5,4),
(2,5),(4,5),(1,6),(3,6)}
\draw[S_3] \a +(-.1,-.1) rectangle +(.1,.1);

\node[draw,star,star points=7,star point ratio=0.2,fill=orange!80, inner sep=2]  at (2,2) {};
\end{scope}}

\end{tikzpicture}
\end{center}
\caption{The {\em chess-knight metric}, with generators $\{(\pm 2,\pm1),(\pm 1,\pm 2)\}$.
The spheres of radius 1, 2, and 3 are shown entirely, as well as the first three dilates of $Q$.  
Four points from the sphere $S_4$
are shown, marked with stars, to illustrate the difference between the norm
and the word metric:  those points have $|w|=4$ while $\|w\|_L=4/3$.
It takes three steps to fill in the lattice points in $Q$, so Lemma~{bounded-diff} shows that $|w|$ and 
$\|w\|$ never differ by more than $3$.
\label{chessknight}}
\end{figure}
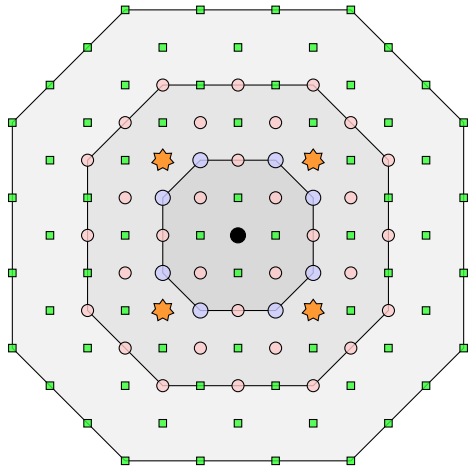

\begin{prop}[Limit shape] As a Gromov-Hausdorff limit, we have
$$\lim_{n\to\infty}{\textstyle \frac 1n} S_n=L.$$
\end{prop}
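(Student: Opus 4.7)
The plan is to deduce Gromov-Hausdorff convergence from Hausdorff convergence inside the ambient normed space $(\R^d, \normL{\cdot})$, by giving matching upper and lower approximations. Both inclusions required for a vanishing Hausdorff distance follow directly from tools already in hand: Lemma~\ref{bounded-diff} will control how much $\frac{1}{n}S_n$ can bulge inward, and the simple-spelling set $\eP_n$ will witness that $\frac{1}{n}S_n$ fills out all of $L$.

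First I would handle the inclusion $\frac{1}{n}S_n \subseteq$ (small neighborhood of $L$). By Lemma~\ref{bounded-diff}, any $\W \in S_n$ satisfies $n - K < \normL{\W} \le n$, so $\normL{\tfrac{1}{n}\W} \in (1 - K/n, 1]$. Radially projecting $\tfrac{1}{n}\W$ onto $L$ (that is, replacing it with $\tfrac{1}{\normL{\W}}\W$) moves it by at most $K/n$ in the $\normL{\cdot}$ norm, so the one-sided Hausdorff distance from $\tfrac{1}{n}S_n$ to $L$ is at most $K/n$.

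Next I would handle the reverse inclusion $L \subseteq$ (small neighborhood of $\frac{1}{n}S_n$). Here I would use the observation already noted after the definition of $\eP_n$: every element of $\eP_n$ lies in $nL \cap S_n$, and $\eP_n$ is $2$-dense in $nL$ with respect to the $L$-metric (since any two adjacent simple-spelling lattice points on a common facet differ by some $\V_i - \V_j$ of word length $2$, so a standard argument along the integer lattice of each simplex of the triangulation fills the facet up to $L$-distance $2$). Rescaling by $1/n$, we get that $\tfrac{1}{n}\eP_n \subseteq \tfrac{1}{n}S_n$ is $2/n$-dense in $L$.

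Combining the two bounds, the Hausdorff distance in $(\R^d,\normL{\cdot})$ between $\tfrac{1}{n}S_n$ and $L$ is at most $\max(K/n, 2/n) = O(1/n)$, and since both sets sit isometrically inside the common ambient space $(\R^d, \normL{\cdot})$, this controls the Gromov-Hausdorff distance. Taking $n\to\infty$ gives the claim. I do not anticipate a real obstacle; the only place that merits care is confirming that $\eP_n$ is $2$-dense in $nL$ in every dimension, which is the step I would spell out most explicitly (using the chosen triangulation of $L$ and moving one unit at a time between adjacent extreme points of a simplex).
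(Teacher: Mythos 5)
Your proof is correct and follows essentially the same route as the paper: the outward bound comes from Lemma~\ref{bounded-diff} (points of $S_n$ lie between $(n-K)L$ and $nL$), and the inward bound comes from the $2$-density of $\eP_n\subseteq S_n$ in $nL$, exactly the two ingredients the paper invokes. Your version is merely more quantitative, making the $O(1/n)$ Hausdorff bound explicit; the one step you rightly flag for care (the density constant for $\eP_n$ in general dimension, which may depend on $d$ rather than being exactly $2$) is harmless since any uniform constant suffices after rescaling by $1/n$.
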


\begin{proof}
For the forward inclusion, if \(\X_n\in S_n\) is a sequence, then by the previous lemma,
$$	\lim_{n\to\infty}\left\|\frac{\X_n}{n}\right\|_L=
	\lim_{n\to\infty}\frac{\normL{\X_n}}{n} =	\lim_{n\to\infty}\frac{|\X_n|}{n} =	1.$$
The reverse inclusion follows from the fact that $\eP_n\subseteq S_n$ is $2$-dense in $nL$.
\end{proof}

\section{The limit measure}
\label{sec:measure}

We will study the measure induced on $L$ from the counting measure on  $S_n$.
First we note that the rank-one case is trivial:  considering $(\Z,S)$ with $S=-S$, 
there are exactly as many negative integers in $S_n$ as positive integers, by symmetry.
Thus the counting measure on $\frac 1n S_n$ limits to the uniform measure on the two-point set $L=\{\pm a\}$.
We study  $d\ge 2$ below.

Note that \(\frac1nS_n\cap\Delta \sigma\) is near $\sigma$ by Lemma~\ref{bounded-diff}.
Recall that if $\mu_n$, $\mu$ denote Borel probability measures on a space $X$,
then the following are
equivalent (\cite[Thm 2.4]{durrett}):
\begin{itemize}
\item For every bounded continuous $f:X\to\R$,
\[
	\lim_{n\to\infty}\int_Xfd\mu_n=\int_X f d\mu.
\]
\item For every open $U\subseteq X$,
\[
	\liminf_{n\to\infty}\mu_n(U)\ge\mu(U).
\]
\end{itemize}
In this setup, $(\mu_n)$ is said to \textit{converge weakly} to $\mu$.  On the other hand, $(\mu_n)$ is said to
\textit{converge strongly} if
\[
	\lim_{n\to\infty}\sup_{A\subseteq X}\bigl|\mu_n(A)-\mu(A)\bigr|=0.
\]

\begin{theorem}[Strong convergence on $L$]\label{strongconv}
Define measures $\mu_n$ and $\mu$  on $L$ by defining, for Lebesgue-measurable sets $\tau\subseteq L$,
$$	\mu_n(\tau) = \frac{\#(\frac1n S_n\cap \hat\tau)}{\#S_n} = \frac{\#(S_n \cap \Delta \tau)}{\# S_n}; \ \  \qquad
	\mu(\tau) = \frac{\Area(\hat\tau)}{\Area(Q)}.$$
Then $\mu_n\to\mu$ \textit{strongly}.
\end{theorem}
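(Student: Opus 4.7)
The plan is to establish the sharp per-sector sphere count
$$\#(S_n \cap \Delta\sigma) = d\Vol(\hat\sigma)\, n^{d-1} + O(n^{d-2})$$
for each top-dimensional simplex $\sigma$ of a triangulation of $L$, and then to identify the cone measure as the limiting ratio.

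First I fix a simplicial triangulation of $L$ whose top-dimensional simplices $\sigma_1,\ldots,\sigma_N$ have vertices at the significant generators; the cones $\Delta\sigma_j$ partition $\R^d$ up to a set of codimension one. By Lemma~\ref{bounded-diff}, the portion of $S_n$ in each sector sits inside the thin shell $(nQ\setminus(n-K)Q) \cap \Delta\sigma_j$ adjacent to the facet $n\sigma_j$. Within a fixed sector $\Delta\sigma$ with vertices $V_1,\ldots,V_d$, the sublattice $\Lambda_\sigma = \bigoplus_i \Z V_i$ has index $[\Z^d:\Lambda_\sigma] = |\det(V_1,\ldots,V_d)| = d!\,\Vol(\hat\sigma)$, and every lattice point $w\in\Z^d\cap\Delta\sigma$ decomposes uniquely as $w = r + \sum n_iV_i$ with $r$ a coset representative and $n_i \in \Z_{\ge 0}$. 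Since $Q$ is a lattice polytope, an Ehrhart-style argument applied coset by coset should yield
$$\#(B_n\cap\Delta\sigma) = \Vol(\hat\sigma)\,n^d + C_\sigma\,n^{d-1} + O(n^{d-2})$$
with a genuinely constant subleading coefficient $C_\sigma$. Subtracting $\#(B_{n-1}\cap\Delta\sigma)$ telescopes the $n^d$ terms to $d\Vol(\hat\sigma)n^{d-1}$, cancels the $C_\sigma$ contributions up to $O(n^{d-2})$, and delivers the per-sector asymptotic.

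Summing over $j$ gives $\#S_n = dV n^{d-1} + O(n^{d-2})$ with $V = \Vol(Q)$, so $\mu_n(\sigma_j) = \Vol(\hat\sigma_j)/V + O(1/n) = \mu(\sigma_j) + O(1/n)$. The uniformity of this $O(1/n)$ bound across the finite triangulation gives $|\mu_n(\tau) - \mu(\tau)| = O(1/n)$ for $\tau$ any finite union of triangulation simplices, and a Jordan-type inner/outer approximation by such unions promotes this to uniform convergence over the class of Lebesgue-measurable $\tau\subseteq L$ on which the strong-convergence definition is substantive.

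The main obstacle is the sharp counting step: showing that the subleading coefficient of $\#(B_n\cap\Delta\sigma)$ is genuinely constant rather than merely $O(n^{d-1})$, since the latter would dominate the target sphere count upon subtraction. This is delicate because $B_n$ is not simply $\Z^d\cap nQ$---as in the chess-knight example, where $(1,1)\in Q$ yet $|(1,1)| = 2$---and because geodesic ``shortcuts'' can cross sector boundaries, as when $(4,3) \in \Delta\sigma$ admits the three-step spelling $2(1,2)+(2,-1)$ using a generator from the adjacent sector instead of the naive spelling via its coset representative $(2,2)$. Tracking these cross-sector effects precisely enough to extract a constant, rather than oscillating, subleading term is the technical heart of the argument.
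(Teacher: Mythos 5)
There is a genuine gap, and it sits exactly where you flag it: the claim that $\#(B_n\cap\Delta\sigma)=\Vol(\hat\sigma)n^d+C_\sigma n^{d-1}+O(n^{d-2})$ with a \emph{constant} subleading coefficient is asserted (``should yield'') but never proved, and it is not a technicality one can defer --- it is the entire theorem. Your coset-by-coset Ehrhart argument counts $\Z^d\cap n\hat\sigma$, for which Ehrhart does give an exact polynomial since $\hat\sigma$ is a lattice simplex; but $B_n\cap\Delta\sigma$ differs from $\Z^d\cap n\hat\sigma$ on the shell $nQ\setminus(n-K)Q$ of Lemma~\ref{bounded-diff}, a set containing $\Theta(n^{d-1})$ lattice points --- the same order as the sphere count you are trying to extract. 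Controlling the $n^{d-1}$ coefficient of that discrepancy is equivalent to the sphere-counting problem itself, so the proposed reduction is circular. Worse, the differencing strategy is precisely the one Section~2 of the paper warns against: for the unit square, $\#(\Z^2\cap nQ)=n^2+O(n)$ while the annulus counts oscillate between $0$ and $4n$, and Cannon's virtually abelian example shows that ball asymptotics genuinely fail to determine sphere asymptotics. Nothing in your outline rules out an oscillating $n^{d-1}$ term for $B_n\cap\Delta\sigma$.

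The paper avoids balls entirely and instead constructs an almost-bijection between $S_n\cap\Delta\sigma$ and the lattice points of the \emph{width-one geometric annulus} $\Delta_n\sigma=\{t\X: n-1<t\le n,\ \X\in\sigma\}$. Concretely, one first shows (via projection onto $\sigma^\perp$ and clearing denominators) that all but boundedly many $\W\in\Delta_n\sigma$ admit a geodesic spelling $a\U+b\V+\W'$ with $|\W'|$ bounded and $a,b\ge K$, so that $\W,\W-\V,\dots,\W-K\V$ all lie on a common geodesic (Corollary~\ref{manycopies}); the map $\Phi_n(\W)=\W-(|\W|-n)\V$ then carries $\Z^d\cap\Delta_n\sigma$ onto $S_n\cap\Delta\sigma$ up to lower-order error (Lemma~\ref{Z2_bijectivelemma} and its $d>2$ analogue). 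This is exactly the device that absorbs your ``cross-sector shortcut'' worry: the defect $|\W|-\normL{\W}$ is corrected by sliding along a vertex direction, not estimated away. After that, Pick's theorem (for $d=2$) or differencing the Ehrhart polynomial of the lattice polytope $\Delta_1\sigma$ (for $d>2$) counts $\Z^d\cap\Delta_n\sigma$ --- and here differencing is legitimate because one has an exact polynomial for a fixed lattice polytope, not an asymptotic with unknown error. If you want to salvage your outline, replace the ball count with this annulus count and supply the geodesic-modification lemma; your coset decomposition of $\Z^d\cap\Delta\sigma$ by $\Lambda_\sigma$ could then serve as an alternative to the paper's hyperplane-slicing argument in the lattice-counting step, which is the one part of your plan that does go through.
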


From this we will immediately derive the weak convergence of measures on $\R^d$ needed to prove Theorem~\ref{sphere-asymp}.
Let $\eN_r(A)$ denote the $r$-neighborhood of $A\subseteq \R^d$.
 
\begin{corollary}[Weak convergence on $\R^d$]
Define measures $\nu_n$ and $\nu$ on $\R^d$ by defining, for Lebesgue-measurable sets $A\subset \R^d$,
$$	\nu_n(A) = \frac{\#(\frac1n S_n\cap A)}{\#S_n}   ; \ \ \qquad
	\nu(A) = \mu(A\cap L).$$
Then $\nu_n\to\nu$ weakly.
\end{corollary}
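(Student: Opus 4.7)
The plan is to reduce this corollary to Theorem~\ref{strongconv} by observing that $\frac{1}{n}S_n$ is squeezed into a shrinking neighborhood of $L$. By Lemma~\ref{bounded-diff}, every $\X\in S_n$ satisfies $n-K\le\normL{\X}\le n$, so the support of $\nu_n$ lies in the compact set $Q$. The natural bridge between $\nu_n$ and $\mu_n$ is the radial projection $\pi:\R^d\setminus\{0\}\to L$ given by $\pi(\X)=\X/\normL{\X}$; scale invariance together with the identity $\pi^{-1}(\tau)\setminus\{0\}=\Delta\tau$ shows that $\pi_*\nu_n=\mu_n$.

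To verify weak convergence I would use the bounded-continuous-function characterization: fix $f:\R^d\to\R$ bounded and continuous, and aim to show
\[
\int_{\R^d} f\,d\nu_n \;=\; \frac{1}{\#S_n}\sum_{\X\in S_n}f(\X/n) \;\longrightarrow\; \int_L f|_L\,d\mu \;=\; \int_{\R^d} f\,d\nu.
\]
The key step is the pointwise comparison between $f(\X/n)$ and $f(\pi(\X))$. Writing $\X/n-\pi(\X)=(1-n/\normL{\X})(\X/n)$, Lemma~\ref{bounded-diff} bounds the scalar factor in absolute value by $K/(n-K)$, while $\X/n\in Q$ is bounded in Euclidean norm. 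Thus $|\X/n-\pi(\X)|=O(1/n)$ uniformly in $\X\in S_n$.

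Since $f$ is uniformly continuous on the compact set $Q$ (which contains both $\X/n$ and $\pi(\X)$), this gives $|f(\X/n)-f(\pi(\X))|\to0$ uniformly in $\X\in S_n$, so
\[
\int_{\R^d} f\,d\nu_n \;=\; \frac{1}{\#S_n}\sum_{\X\in S_n}f(\pi(\X)) + o(1) \;=\; \int_L f|_L\,d\mu_n + o(1).
\]
Strong convergence $\mu_n\to\mu$ on $L$ is convergence in total variation, which gives $\int_L g\,d\mu_n\to\int_L g\,d\mu$ for every bounded measurable $g$ on $L$; applying this with $g=f|_L$ yields the desired limit.

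The main obstacle is essentially already subsumed by Theorem~\ref{strongconv}; the only additional ingredients are the pushforward identification $\pi_*\nu_n=\mu_n$ and the uniform comparison $|\X/n-\pi(\X)|=O(1/n)$, both of which are elementary consequences of Lemma~\ref{bounded-diff}. In particular there is no loss in going from strong convergence on $L$ to merely weak convergence on $\R^d$, because the support of $\nu_n$ collapses onto $L$ at rate $1/n$.
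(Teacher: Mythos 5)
Your proof is correct, but it takes a different route from the paper's. The paper verifies the open-set form of the portmanteau criterion directly: for open $U$ it sets $\sigma = U\cap L$, approximates from inside by a closed $\sigma'$ with $\mu(\sigma')>\mu(\sigma)-\epsilon$, observes via Lemma~\ref{bounded-diff} that $\eN_{K/n}(\sigma')\subseteq U$ eventually captures all of $\frac1n S_n\cap\hat\sigma'$, and concludes $\liminf\nu_n(U)\ge\mu(\sigma')\ge\nu(U)-\epsilon$. You instead work with the test-function characterization, using the radial projection $\pi(\X)=\X/\normL{\X}$ to identify $\pi_*\nu_n=\mu_n$ and then comparing $f(\X/n)$ with $f(\pi(\X))$ via the uniform bound $|\X/n-\pi(\X)|=O(1/n)$ from Lemma~\ref{bounded-diff} and uniform continuity of $f$ on $Q$. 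Both arguments rest on the same two inputs (Theorem~\ref{strongconv} and Lemma~\ref{bounded-diff}); the paper's is a bit shorter, while yours produces directly the statement $\int f\,d\nu_n\to\int f\,d\nu$ for bounded continuous $f$, which is precisely what the proof of Theorem~\ref{sphere-asymp} consumes, so you avoid a second appeal to the portmanteau equivalence. Two cosmetic points: the identity should read $\pi^{-1}(\tau)=\Delta\tau\setminus\{0\}$ rather than the other way around (harmless, since $0\notin\frac1n S_n$ for $n\ge1$), and you should note explicitly that $\pi(\X)=\pi(\X/n)$ by scale-invariance when passing between the two normalizations.
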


\begin{proof}
Let $U\subseteq\R^d$ be open and set $\sigma=U\cap L$.
Given $\epsilon>0$, let $\sigma'\subseteq\sigma$ a closed subset
such that $\mu(\sigma')>\mu(\sigma)-\epsilon$.  Then for any metric inducing the standard topology, we can take large enough
$n$ so that $\eN_{K/n}(\sigma')\subseteq U$.  Then
$$	\liminf_{n\to\infty}\nu_n(U) \ge
	\lim_{n\to\infty}\nu_n(\hat\sigma') =
	\lim_{n\to\infty}\mu_n(\sigma') 
	=	\mu(\sigma') \ge 	\mu(\sigma)-\epsilon
	=\nu(U)-\epsilon.$$

Let $\epsilon\to 0$ to get the desired inequality.
\end{proof}

Next we demonstrate that this suffices to prove the main theorem.

\begin{proof}[Proof of Theorem~\ref{sphere-asymp}]
Suppose that $f:\Z^d\to \R$ is asymptotic to a function $g:\R^d\to\R$ that is homogeneous
of order $k$, meaning that $g(a\X)=a^kg(\X)$, and 
$\lim\limits_{\X\to\infty} \frac{f(\X)}{g(\X)}=1$.
This means that for all $\epsilon>0$ there exists $N$ such that 
$$\X\notin B_N \implies   \quad (1-\epsilon) g(\X) \le f(\X) \le (1+\epsilon) g(\X).$$
But then we have 
$$n>N \implies \quad (1-\epsilon)\sum_{\X\in S_n }  g(\X) 
\le \sum_{\X\in S_n } f(\X) \le (1+\epsilon) \sum_{\X\in S_n }g(\X),$$
which gives
$$(1-\epsilon)\frac{\sum_{\X\in S_n }  g(\X) }{n^k |S_n|}
\le \frac{\sum_{\X\in S_n } f(\X)}{n^k |S_n|} \le (1+\epsilon) \frac{\sum_{\X\in S_n }g(\X)}{n^k |S_n|}.
$$
Since $\epsilon$ was arbitrary, this means that 
$$\lim_{n\to\infty} \frac{\sum_{\X\in S_n }  f(\X) }{n^k |S_n|} =
\lim_{n\to\infty} \frac{\sum_{\X\in S_n }  g(\X) }{n^k |S_n|}.$$ 
But then 
$g(\X)/n^k= g(\X/n)$, so 
$$\frac{\sum_{\X\in S_n }  g(\X) }{n^k |S_n|} = \frac {\sum_{\X\in \frac 1n S_n} g(\X)}{|S_n|}
=\int_{\R^d} g(\X) \ d\nu_n(\X),$$ with respect to the measure defined above.
Noting that $\nu_n$ and $\nu$ are supported on the compact set $Q$,
weak convergence finishes the job:
$$\lim_{n\to\infty}    \frac{\sum_{\X\in S_n }  f(\X) }{n^k |S_n|} =
\lim_{n\to\infty}  \int_{\R^d} g(\X) \ d\nu_n(\X) = \int_{\R^d} g(\X) \ d\nu(\X)
=\int_L g(\X) \ d\mu(\X),$$ as desired.
\end{proof}

\subsection{Rank two}

We will prove Theorem~\ref{strongconv}  in this section ($d=2$) and the following section ($d>2$).
We begin by giving the necessary counting argument in $\Z^2$, 
because it has some features that are particular to that dimension.

In what follows, fix a side $\sigma$ of $L$ and let $\U$ and $\V$ be the names of its
endpoints, which are necessarily integer points.  The key step in proving
Theorem~\ref{strongconv} is to count the number of points of $S_n$ in the sector $\Delta
\sigma$ over an entire edge.  First we get control on the geodesic spellings of large
words in the sector.

\begin{lemma}[Geodesic spellings] There is a uniform bound $D_0$ such that for
sufficiently large words $\W$ in $\Delta \sigma$, there is a geodesic representative of
the form $\W=a\U+b\V+\W'$, where $|\W'|<D_0$.
\end{lemma}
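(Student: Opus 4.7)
The plan is to start from any geodesic spelling $\W=\sum_i n_i\A_i$ and, through length-preserving rewriting, concentrate almost all the letters on the extreme generators $\U,\V$ of $\sigma$, with everything else absorbed into a bounded remainder $\W'$. For generators \emph{not} lying on $\sigma$ (``off-side'' letters), I would introduce the linear functional $\ell:\R^2\to\R$ determined by $\ell(\U)=\ell(\V)=1$; this is well-defined because $\U,\V$ are linearly independent, as the affine line through them does not pass through the origin. Since $\sigma$ is a face of $Q=\CHull(S)$, one has $\ell(\A_i)\le 1$ for every generator, with strict inequality exactly when $\A_i\notin\sigma$; set $\delta:=\min\{1-\ell(\A_i):\A_i\notin\sigma\}>0$. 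For $\W\in\Delta\sigma$ one has $\ell(\W)=\normL{\W}$ (since $\W$ is a positive multiple of a point on $\sigma$, where $\ell=1$), so evaluating $\ell$ on the geodesic spelling of $\W$ yields
\[
|\W|-\normL{\W}\;=\;\sum_{i}n_i(1-\ell(\A_i))\;\ge\;\delta\!\sum_{\A_i\notin\sigma}\!n_i,
\]
and Lemma~\ref{bounded-diff} forces fewer than $K/\delta$ off-side letters in any geodesic spelling.

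Next, for generators $\A_j$ lying strictly between $\U$ and $\V$ on $\sigma$ (``middle'' letters), I would write $\A_j=(p_j/d_j)\U+((d_j-p_j)/d_j)\V$ in lowest terms with $d_j\ge 2$, so that the identity $d_j\A_j=p_j\U+(d_j-p_j)\V$ exhibits two spellings of the same integer vector, each of total length $d_j=\normL{d_j\A_j}$, and hence both geodesic. Swapping $d_j$ copies of $\A_j$ for $p_j$ copies of $\U$ plus $d_j-p_j$ copies of $\V$ preserves total word length and does not increase the off-side count, so after iterating we may assume every middle generator appears fewer than $d_j$ times, contributing altogether at most $|S|\cdot\max_j d_j$ middle letters.

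Combining these reductions, the rewritten geodesic spelling uses $a$ copies of $\U$, $b$ copies of $\V$, and at most $D_0:=K/\delta+|S|\cdot\max_j d_j$ further letters spelling some integer vector $\W'$, yielding $\W=a\U+b\V+\W'$ with $|\W'|<D_0$. The main thing to verify is that the decomposition actually splits into geodesic pieces: the full spelling has length $|\W|=a+b+(\text{letters spelling }\W')$ by construction, while Lemma~\ref{ineq1} gives $|\W|\le a+b+|\W'|\le a+b+(\text{letters spelling }\W')$, so equality must hold throughout, confirming $|\W'|<D_0$. I expect the main subtlety to be organizing the length-preserving swaps consistently when several middle generators appear (handled by running each swap independently, since the identity $d_j\A_j=p_j\U+(d_j-p_j)\V$ is local in the spelling) and handling $\W$ near the boundary rays of $\Delta\sigma$, where $a$ or $b$ may vanish; the ``sufficiently large'' hypothesis is there precisely to ensure the bounded remainder $\W'$ is dominated by the main $a\U+b\V$ part of the spelling.
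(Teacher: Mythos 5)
Your proposal is correct and follows essentially the same route as the paper: the linear functional $\ell$ normalized to equal $1$ on $\sigma$ is exactly the paper's projection onto $\sigma^\perp$, used with Lemma~\ref{bounded-diff} to bound the off-$\sigma$ letters, and the length-preserving swap $d_j\A_j \mapsto p_j\U+(d_j-p_j)\V$ is the paper's reduction of the middle generators via common denominators. Your version is slightly more quantitative (explicit $\delta$ and the bound $K/\delta$), but the ideas coincide.
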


\begin{proof}
Label the elements of $S$ which lie on the line segment between $\U$ and $\V$ as
$\A_1,\ldots,\A_r$.  Then the first task is to show that we can find a geodesic
representative of the form
$$\W=a\U +b\V + \sum \alpha_i\A_i + \W'',$$
where $|\W''|$ is bounded.  This is true because $\U,\V,$ and the $\A_i$ are the only
generators whose projection onto $\sigma^\perp$ (the line through the origin that is
perpendicular to $\sigma$) is one.  We know by Lemma~\ref{bounded-diff} that the
projection of $\W$ onto $\sigma^\perp$ is within $K$ of $|\W|$, and thus there is a
uniform bound on the number of other generators that can appear.

Next, write each of the $\A_i$ as $\frac{p_i}{q_i}\U + \frac{q_i-p_i}{q_i}\V$, and let
$q=\mathop{\rm lcm}\{q_i\}$.  Then without loss of generality, the coefficients $\alpha_i$
are at most $q$.  (Otherwise, $q\A_i$ can be rewritten as an integer combination of $\U$
and $\V$.)  Finally, setting $\W'=\sum \alpha_i\A_i + \W''$ completes the proof.
\end{proof}

\begin{corollary}[Modifying geodesic spellings]\label{manycopies} 
For all but boundedly many words $\W\in\Delta_n\sigma$, there is a geodesic in the Cayley graph
from the identity to $\W$ which passes through the points
$$\W, \quad  \W-\V, \quad \W-2\V, \quad \ldots \quad \W-K\V.$$
\end{corollary}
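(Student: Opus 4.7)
The plan is to apply the Geodesic Spellings lemma and then exploit the abelian structure of $\Z^d$: since all generators commute, any permutation of a geodesic spelling of $\W$ yields another geodesic path in the Cayley graph from the identity to $\W$. The strategy is therefore to rearrange a geodesic spelling of $\W$ so that its final $K$ letters are all copies of $\V$, which immediately forces the geodesic path to pass through the successive points $\W-K\V,\W-(K-1)\V,\ldots,\W-\V,\W$.

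First, I would invoke the previous lemma to obtain, for $\W\in\Delta_n\sigma$ with $n$ sufficiently large, a geodesic representative $\W=a\U+b\V+\W'$ with $|\W'|<D_0$. Provided $b\ge K$, reordering this spelling to place $K$ copies of $\V$ at the end yields a path of length $|\W|$ from the identity to $\W$; the initial subword of length $|\W|-K$ represents $\W-K\V=a\U+(b-K)\V+\W'$, and this initial subword is itself geodesic, since any shorter spelling of $\W-K\V$ would contradict $|\W-K\V|\ge|\W|-K$.

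It remains to bound the set of exceptional $\W\in\Delta_n\sigma$ for which the spelling produced by the lemma has $b<K$. The identity $\W-a\U=b\V+\W'$, combined with $b<K$ and $|\W'|<D_0$, forces such $\W$ to lie within a bounded Euclidean distance of the ray $\{t\U:t\ge 0\}$, the bound depending only on $K$, $D_0$, and the Euclidean norms of the generators. Together with the annular constraint $\normL{\W}\in(n-1,n]$ and $\normL{\U}=1$, this pins $a$ to a bounded interval near $n$, and $b$ and $\W'$ each range over finitely many values, so the exceptional set in $\Delta_n\sigma$ has cardinality $O(1)$ uniformly in $n$.

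The delicate point is precisely this uniformity: one must check that the exceptional count is bounded independently of $n$, rather than merely for each $n$ separately. This reduces to observing that a bounded Euclidean tube around the ray through $\U$, intersected with the bounded-width annulus $\Delta_n\sigma$, has Euclidean diameter bounded independently of $n$ (once the constants $K$ and $D_0$, which depend only on $S$, are fixed), so it contains uniformly boundedly many lattice points.
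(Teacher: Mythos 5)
Your proof is correct and follows essentially the same route as the paper: invoke the Geodesic Spellings lemma, observe that the words whose $\V$-coefficient falls below $K$ lie in a bounded-diameter piece of a tube around the ray $\Delta\U$ inside the annulus (hence are uniformly boundedly many), and rearrange the remaining spellings to end in $K$ copies of $\V$. Your treatment is if anything slightly more explicit than the paper's on the uniformity in $n$ and on why the truncated prefix is geodesic.
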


\begin{proof}  First, we show that all but boundedly many lattice points in $\Delta_n\sigma$ 
have a geodesic spelling 
in which the coefficients of $\U$ and $\V$ are each at least $K$.
We just consider the coefficient of $\V$ without loss of generality.
Note that $\normL \V=1$ by definition of the $L$-norm.  Thus words that are spelled 
$\W=a\U +b\V + \W'$
with $|\W'|<D_0$ are within $b+D_0$ of the line $\Delta \U$.  Let $D=K+D_0$.
Now note that $\eN_D(\Delta_n \U)$ has diameter $2D+1$, 
so its number of lattice points is uniformly
bounded.  This shows that $\W,\W-\V,\ldots,\W-K\V$ are all metrically between
$\W$ and $e$.

Finally, if a point is farther than $K$ from a line, then moving it by a distance $K$ will 
not cross over the line.  Thus the modified spellings still represent points in the sector
$\Delta\sigma$.
\end{proof}

Then we get a very clean result:  the integer points in $\Delta_n \sigma$ count, up to bounded additive error, the quantity we seek.

\begin{lemma}[Sphere counting  for $\Z^2$] 
\label{Z2_bijectivelemma}
$$ \# (S_n \cap \Delta \sigma ) \eadd \# (\Z^2\cap \Delta_n\sigma).$$
\end{lemma}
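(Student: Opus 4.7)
The plan is to define an explicit map
$$\psi:\Z^2\cap\Delta_n\sigma\longrightarrow S_n\cap\Delta\sigma,\qquad \psi(\W):=\W-(|\W|-n)\V,$$
and to show that it is a bijection off an $O(1)$ exceptional set on each side, using Corollary~\ref{manycopies} as the main engine.

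For well-definedness: if $\W\in\Z^2\cap\Delta_n\sigma$ then $\normL{\W}\in(n-1,n]$, so Lemma~\ref{bounded-diff} forces $|\W|-n\in\{0,1,\ldots,K-1\}$, and $\psi$ subtracts at most $K$ copies of $\V$. Corollary~\ref{manycopies} supplies a geodesic from the identity through $\W,\W-\V,\ldots,\W-K\V$ for all but $O(1)$ of these $\W$, giving $|\psi(\W)|=n$. Provided the $\V$-coordinate of $\W$ exceeds $K$---which excludes only an $O(1)$ collar along the opposite ray $\Delta\U$---the image also stays in the sector $\Delta\sigma$.

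Injectivity is immediate from the fact that $\normL{\cdot}$ restricts to a linear function on the closed cone $\Delta\sigma$ with $\normL{\V}=1$: if $\psi(\W_1)=\psi(\W_2)$ then $\W_1-\W_2=(|\W_1|-|\W_2|)\V$, and linearity gives $\normL{\W_1}-\normL{\W_2}=|\W_1|-|\W_2|$, an integer whose absolute value is less than $1$ since both norms lie in $(n-1,n]$; hence $\W_1=\W_2$. For surjectivity, given $\W^*\in S_n\cap\Delta\sigma$, the Geodesic Spellings lemma produces $\W^*=a\U+b\V+\W'$ with $|\W'|<D_0$, and linearity on the sector then yields $\normL{\W^*}\in(n-2D_0,n]$. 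Setting $j:=n-\lceil\normL{\W^*}\rceil\in\{0,1,\ldots,2D_0-1\}$ and $\W:=\W^*+j\V$, one checks $\normL{\W}=\normL{\W^*}+j\in(n-1,n]$, so $\W\in\Delta_n\sigma$; applying Corollary~\ref{manycopies} to this $\W$ gives $|\W|=|\W^*|+j=n+j$, and therefore $\psi(\W)=\W^*$.

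The main obstacle is the bookkeeping of the exceptional sets---points in $\Delta_n\sigma$ where Corollary~\ref{manycopies} fails, points too close to the boundary ray $\Delta\U$, and the pullbacks under $\psi$ of these on the $S_n$ side. Each is supported in a bounded neighborhood of a lower-dimensional feature (a single ray, a cap near the apex, or a bounded translate of such), so each has size $O(1)$ uniformly in $n$; hence the bijection off these $O(1)$ sets immediately produces $\#(S_n\cap\Delta\sigma)\eadd\#(\Z^2\cap\Delta_n\sigma)$.
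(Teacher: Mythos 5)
Your proof is correct and follows essentially the same route as the paper: the same map $\W \mapsto \W-(|\W|-n)\V$, shown to be a bijection off an $O(1)$ exceptional set via Corollary~\ref{manycopies}, with your injectivity and surjectivity steps merely spelling out details the paper's proof leaves terse. One small note on surjectivity: what you actually need is $j\le K$ so that Corollary~\ref{manycopies} applies to $\W=\W^*+j\V$, and this follows directly from Lemma~\ref{bounded-diff} (which gives $\normL{\W^*}>n-K$, hence $j\le K-1$); the $2D_0$ bound you derive from the Geodesic Spellings lemma is not guaranteed to be $\le K$, so you should cite the bounded-difference estimate instead.
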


\begin{proof}
Let $\Phi_n: \Z^2 \to \Z^2$ be given by 
$\Phi_n(\W) = \W-m \V$, where $m=|\W|-n$.   That is, it modifies words by subtracting off copies of
$\V$ when the wordlength differs from $n$.

Now consider applying $\Phi_n$ to words $\W\in \Delta_n\sigma$.
For such words, as long as $n$ is sufficiently large and $\W$ is $D$-far from $\Delta \U$, 
Corollary~\ref{manycopies} guarantees that 
there is a geodesic representative using at least $K$ copies of $\V$.  
But we know that $0\le |\W|-n\le K$, so this 
 means that  $\Phi_n(\W) = \W-m\V$ is a point on a geodesic path from $e$ to $\W$.  Thus
 $|\Phi_n(\W)|=|\W| - (|\W|-n) = n$, or in other words $\Phi_n(\W)\in S_n$.

This argument shows that, apart from a bounded number of points, 
$\Phi_n$ gives a bijection from $\Z^2\cap \Delta_n\sigma$ to $S_n \cap \Delta \sigma$.
Injectivity follows from Corollary~\ref{manycopies}; surjectivity is established 
by noting that if $|\X|=n$ and 
$\normL{\X}=n-k$ for a point in the sector, then 
$\X+k\V\in\Delta_n\sigma$.
\end{proof}

\begin{proof}[Proof of Theorem~\ref{strongconv} when $d=2$]
The region $\Delta_n\sigma$ is a quadrilateral  with three of its four sides included, 
whose vertices have integer coordinates.
Pick's Theorem says that for any polygonal region whose extreme points are integer points, the 
area is equal to the number of integer points in the interior plus half of the integer points on the boundary
minus one ($A=i+\frac b2 -1$).  Now $\Delta_n\sigma$ contains one of the two long boundary segments, and the number of integer
points on the short boundary segments is uniformly bounded.  Therefore, up to additive error, 
the number of integer points in $\Delta_n\sigma$ is equal to its area.  But its area is exactly
$$\Area(\Delta_n\sigma) = n^2 \Area(\Delta_1\sigma)-(n-1)^2\Area(\Delta_1\sigma)=(2n-1)\Area(\hat\sigma).$$ 
Thus we have $\#(S_n \cap \Delta \sigma) \eadd (2n-1)\Area(\hat\sigma)$, and summing over all sides gives
$\#S_n = (2n-1)\Area(Q)$, 
which shows that 
$$\frac{\#(S_n \cap \Delta \sigma)}{\# S_n} \to \frac{\Area(\hat\sigma)}{\Area(Q)}.$$
(Note that this also establishes the spherical growth asymptotics for $d=2$, as in Theorem~\ref{growth}.)

To complete the proof it suffices to show that the estimate $\#(\Z^2 \cap \Delta_n \tau) \eadd \Area(\Delta_n\tau)$
is valid for small subarcs $\tau\subset \sigma$.  Consider $\Delta_n\tau$, and approximate it by an integer trapezoid $T_n$
in the following way:  for the two vertices on $nL$, replace them with nearest-possible integer vertices on $nL$, and likewise
for the two vertices on $(n-1)L$.  ($T_n$ is nondegenerate for sufficiently large $n$.)  Then it is clear that 
both the area and the number of lattice points in $T_n$ are boundedly close to those values for $\Delta_n\tau$, 
so we are done.
\end{proof}

This completes the proof that counting measure limits to cone measure on the polygon $L$.

\subsection{General rank}

For general rank $d$, we will get asymptotic comparisons rather than additive difference by carrying out the corresponding estimates.
We obtain a limit
shape $L = \lim_{n\to\infty} \frac 1n S_n$
in $\R^d$  by taking the boundary polyhedron of the  convex hull of the generators;
we have a limiting distance on $\R^d$ via the norm induced by $L$; and finally, we 
obtain a  measure on $L$ as the limit of the counting measures on $S_n$, which is proportional to the 
Euclidean volume subtended by a facet.  However, we no longer have Pick's Theorem to count the points
in regions of the facets, so we must replace that part of the argument.

Recall that the generalization to higher dimensions of Pick's Theorem is by {\em Ehrhart polynomials}:
the number of lattice points in the large dilates of a polytope with integer vertices is given by a polynomial formula in the dilation scalar.  That is, there are coefficients $\{a_i\}$ depending on $P$ such that
$$\#(\Z^d \cap nP) = (\vol P) n^d + a_{n-1}n^{d-1} + \ldots + a_0$$
for any  natural number $n$.

It follows immediately that the number of lattice points in $\Delta_n\sigma$ 
is asymptotic to $d\cdot n^{d-1}\vol(\Delta_1 \sigma)$, by letting 
$P=\Delta_1\sigma$ and noting that $\Delta_n\sigma = nP \setminus (n-1)P$.

We would like to show that the limit measure
on $L$ is uniform on each face.  First, by triangulating if necessary, we may 
assume that all the faces of $L$ are simplices with integer vertices.

\begin{lemma}[Lattice counting for $\Z^d$]
If $\sigma$ is a simplicial $(d-1)$-cell of $L$ and $\tau$ is a simplex of the same 
dimension contained in $\sigma$, then 
$$\lim_{n\to\infty}\frac{\#(\Z^d \cap \Delta_n\tau)}{\#(\Z^d\cap \Delta_n\sigma)} 
= \frac{\vol\Delta_1\tau}{\vol\Delta_1\sigma}.$$
\end{lemma}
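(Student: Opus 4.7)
The plan is to decompose $\Z^d\cap\Delta_n\tau$ into $(d-1)$-dimensional slices at integer level sets of a linear functional, then apply the classical convex-body lattice-point bound on each slice. Since $\sigma$ is a simplicial $(d-1)$-cell of $L$ with integer vertices, its affine hull is a hyperplane $H=\{\ell=c\}$ for some primitive integer linear functional $\ell\colon\R^d\to\R$ and positive integer $c$. For any $x$ in the sector $\Delta\sigma$ we have $\normL{x}=\ell(x)/c$, and $\ell$ takes only integer values on $\Z^d$. Hence the integer points in $\Delta_n\tau$ stratify by $\ell$-level:
\[\Z^d\cap\Delta_n\tau = \bigsqcup_{m=c(n-1)+1}^{cn}\{x\in\Z^d:\ell(x)=m,\;x\in(m/c)\tau\},\]
with the analogous decomposition for $\sigma$.

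After a unimodular coordinate change making $\ell(x)=x_d$, the $m$-th slice is an integer translate of $\Z^{d-1}\times\{m\}$, so its integer count equals $\#(\Z^{d-1}\cap(m/c)\tau_\ast)$, where $\tau_\ast\subset\R^{d-1}$ denotes the projection of $\tau$ to the first $d-1$ coordinates (a simplex with possibly irrational vertices). The classical bound for any convex body $K\subset\R^{d-1}$ gives
\[\#(\Z^{d-1}\cap rK)=\vol(K)r^{d-1}+O(r^{d-2}),\]
with implicit constant controlled by the $(d-2)$-dimensional surface area of $K$---finite for any simplex, independent of whether its vertices are rational.

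Applying this with $K=\tau_\ast$ and $r=m/c$, and summing over the $c$ consecutive integers $m\in(c(n-1),cn]$ using $\sum_m m^{d-1}=c^d n^{d-1}+O(n^{d-2})$, we obtain
\[\#(\Z^d\cap\Delta_n\tau)=c\vol(\tau_\ast)\,n^{d-1}+O(n^{d-2}),\]
and likewise $\#(\Z^d\cap\Delta_n\sigma)=c\vol(\sigma_\ast)\,n^{d-1}+O(n^{d-2})$. The ratio then converges to $\vol(\tau_\ast)/\vol(\sigma_\ast)$, which equals $\vol(\hat\tau)/\vol(\hat\sigma)=\vol(\Delta_1\tau)/\vol(\Delta_1\sigma)$ by the cone-volume formula: since $\hat\tau$ and $\hat\sigma$ share the same apex and both bases lie on the common hyperplane $H$, they have a common cone height, so their volume ratio reduces to the ratio of base $(d-1)$-volumes, which is preserved under the affine projection to $\R^{d-1}$.

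The main obstacle is controlling the $O(r^{d-2})$ error in the convex-body count for a simplex $\tau_\ast$ whose vertices are not necessarily rational; the bound still holds (with constant depending on $\tau$'s surface area), but it cannot be obtained from Ehrhart theory directly. The slice decomposition is what makes the argument work at all: forming the difference $\#(\Z^d\cap n\hat\tau)-\#(\Z^d\cap(n-1)\hat\tau)$ naively would leave an error of the same order $n^{d-1}$ as the leading term (and, for rational approximations of $\tau$, would exhibit Ehrhart quasi-polynomial oscillations destroying the limit). By contrast, the slice decomposition expresses $\#(\Z^d\cap\Delta_n\tau)$ as a sum of $c$ independent $(d-1)$-dimensional lattice-point counts, each enjoying the clean $O(r^{d-2})$ convex-body bound.
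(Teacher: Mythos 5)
Your proof is correct and follows essentially the same route as the paper's: both stratify $\Z^d\cap\Delta_n\tau$ into the finitely many lattice-bearing hyperplanes parallel to $\sigma$ and reduce the problem to a $(d-1)$-dimensional lattice-point count in dilates of a convex region, then take the ratio so that the common hyperplane data cancels. The only cosmetic difference is that you normalize each slice lattice to $\Z^{d-1}$ by a unimodular change of coordinates and invoke the standard convex-body discrepancy bound, whereas the paper sandwiches the slice counts directly using the covolume and the diameter of a fundamental domain of the induced lattice $\Lambda=\Z^d\cap H'$.
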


\begin{proof}
First, observe that all the lattice points in $\Delta_n\tau$ are contained in 
finitely many dilates of $\tau$.  That is, 
$$\#(\Z^d\cap \Delta_n\tau) = 
\#\left(\Z^d \cap \bigcup_{j=1}^{q}k_j\tau\right).$$
Here, $q$ is the number of hyperplanes parallel to $\sigma$ between $\sigma$ and the 
origin which contain lattice points.  To see that this number is finite, consider the formula for the
distance from a point to a plane, remembering that $\sigma$ having integer vertices means that the hyperplane $H$
containing $\sigma$ has an equation with integer coefficients.

Let $\Lambda_j$ be the subset $\Z^d\cap k_j H$.
Note that all of the $\Lambda_j$ are translates of some common lattice $\Lambda=\Z^d\cap H'$, where 
$H'$ is the plane through the origin parallel to $H$.  Let $V$ be the covolume of $\Lambda$ and let 
$c$ be the minimal diameter of a fundamental 
domain for $\Lambda$, which exists because the set of possible diameters is discrete.
We have 
$$  (k_j-c)^{d-1}\frac{\vol\Delta_1\tau}{V}    \le  \#(\Lambda_j \cap k_j \tau)
 \le (k_j+c)^{d-1}\frac{\vol\Delta_1\tau}{V},$$
and the same inequalities with the same constants holds for $\sigma$.
But $n-1< k_j \le n$, so enlarging $c$ by one, we can write 
$$ (n-c)^{d-1} \frac{\vol\Delta_1\tau}{V} \le  \#(\Lambda_j \cap k_j \tau)
 \le (n+c)^{d-1}\frac{\vol\Delta_1\tau}{V}.$$
We can sum over $j$ and get 
$$q(n-c)^{d-1}  \frac{\vol\Delta_1\tau}{V} \le  \#(\Z^d\cap \Delta_n\tau)
 \le q(n+c)^{d-1}\frac{\vol\Delta_1\tau}{V}.$$
This means that
\[
	\#(\Z^d\cap\Delta_n\tau)
		\sim
	qn^{d-1}\frac{\vol\Delta_1\tau}{V}.
\]
Of course the same holds for $\tau=\sigma$.

Thus 
$$\lim_{n\to\infty}\frac{\#(\Z^d \cap \Delta_n\tau)}{\#(\Z^d\cap \Delta_n\sigma)} 
= \frac{\vol\Delta_1\tau}{\vol\Delta_1\sigma},$$
as required.
\end{proof}

The next difference is that Lemma \ref{Z2_bijectivelemma} no longer holds as stated, but is
replaced by an asymptotic statement.

\begin{lemma}[Sphere counting for $\Z^d$] 
$$ \# (S_n \cap \Delta \sigma ) = \# (\Z^d\cap \Delta_n\sigma) + O(n^{d-2}).$$
\end{lemma}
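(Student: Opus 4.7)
The plan is to follow the template of the rank-two proof, with two necessary generalizations. The 2D proof rested on two pillars: a geodesic spelling lemma (every large word in a sector has a representative $\W=a\U+b\V+\W'$ with $|\W'|$ uniformly bounded) and the $\Phi_n(\W)=\W-m\V$ trick that bijectively corrects word length by subtracting copies of a distinguished vertex. I would establish the higher-dimensional analogues of both, and then show that the only loss in the near-bijection comes from lattice points in a neighborhood of $\partial(\Delta\sigma)\cap\Delta_n\sigma$, which contributes $O(n^{d-2})$.

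First I would prove the $d$-dimensional Geodesic Spelling Lemma: writing $\sigma$ as the simplex spanned by significant generators $\V_1,\dots,\V_d$, every sufficiently large $\W\in\Delta\sigma$ admits a geodesic representative of the form $\W=\sum_{i=1}^d a_i \V_i+\W'$ with $|\W'|\le D_0$. The argument is the same as in $d=2$: the $\V_i$ and any further generators $\A_j\in S$ lying on the supporting hyperplane of $\sigma$ are exactly the generators whose projection onto the outward unit normal $\sigma^\perp$ equals $1$, and by Lemma~\ref{bounded-diff} the projection of $\W$ onto $\sigma^\perp$ differs from $|\W|$ by at most $K$; so all other generators appear boundedly often. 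Writing each $\A_j$ as a rational convex combination of the $\V_i$ and clearing the common denominator $q$ lets me bound the coefficients of the $\A_j$ by $q$ without cost to geodesicity, absorbing them into $\W'$.

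Next, the analogue of Corollary~\ref{manycopies}. Fixing $\V=\V_1$, I would show that for every $\W\in\Delta_n\sigma\cap\Z^d$ lying at $L$-distance at least $D:=K+D_0$ from each of the codimension-one faces of $\Delta\sigma$, some geodesic spelling uses at least $K$ copies of $\V$. Indeed, the hypothesis forces the coefficient of $\V$ in the canonical expression from Step~1 to be at least $K$. Then I define $\Phi_n(\W):=\W-m\V$ with $m=|\W|-n\in[0,K]$; this lands in $S_n\cap\Delta\sigma$ on a geodesic from the identity. Injectivity is automatic, and surjectivity (for $\X\in S_n\cap\Delta\sigma$ away from the boundary) follows by adding back $k\V$ where $k=n-\normL{\X}$.

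The final and only substantive step is to bound the exceptional set. The set $\mathcal{E}_n$ of lattice points in $\Delta_n\sigma$ that lie within $L$-distance $D$ of $\partial(\Delta\sigma)$ is contained in a bounded $L$-neighborhood of a finite union of cones over the $(d-2)$-faces of $\sigma$, intersected with the radial shell $\Delta_n\sigma$ of thickness $1$. Each such piece is a $(d-1)$-dimensional region whose Euclidean volume scales like $n^{d-2}$ (one unit in the radial direction, and $n^{d-2}$ from the cross-section on the codimension-one face), and thickening by a uniform $D$ does not change the order. Hence $\#\mathcal{E}_n=O(n^{d-2})$, and the symmetric exceptional set on the $S_n\cap\Delta\sigma$ side is controlled identically. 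Combining,
\[
\#(S_n\cap\Delta\sigma)=\#\Phi_n(\Z^d\cap\Delta_n\sigma)+O(n^{d-2})=\#(\Z^d\cap\Delta_n\sigma)+O(n^{d-2}),
\]
as desired. The main obstacle is precisely this volumetric boundary estimate: in dimension $2$ the boundary of a sector is $1$-dimensional and its $D$-neighborhood inside $\Delta_n\sigma$ has bounded cardinality, whereas in general rank one must exploit that the boundary strata have codimension $\ge 1$ inside the $(d-1)$-dimensional shell $\Delta_n\sigma$ to extract the $O(n^{d-2})$ error term.
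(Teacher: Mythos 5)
Your proposal is correct and follows essentially the same route as the paper, which itself only sketches this step: run the two-dimensional bijection $\Phi_n(\W)=\W-m\V$ on points outside a uniform $D$-neighborhood of the cone on $\partial\sigma$, and absorb the exceptional points into the error term because they occupy a region of size $n$ in at most $d-2$ directions and bounded size in the rest, hence $O(n^{d-2})$ lattice points. Your write-up supplies the details (the $d$-dimensional geodesic spelling lemma and the boundary volume estimate) that the paper leaves implicit, but the underlying argument is identical.
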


To prove this,  
run the same bijective argument as before on points that are outside of a $D$-neighborhood
of the cone on the boundary of $\sigma$.  
The count of points close to the boundary is clearly lower-order,
since they live in a region that measures 
length $n$ in at most $d-2$ vector directions, and is bounded in the others.

This completes the proof of Theorem~\ref{strongconv} for all $d$ (since the $d=1$ case
is elementary).

\section{Applications}\label{applications}

\subsection{Spheres versus balls}

From the sphere averages, we can quickly deduce the other averaging statement previewed in the introduction.

\begin{sphereball}[Spheres versus balls]
For any function $f:\Z^d\to \R$ that is asymptotically homogeneous of order $k$, 
$$\lim_{n\to\infty} \frac{1}{|B_n|} \sum_{\X\in B_n} \frac 1{n^k} f(\X) 
=\left(\frac{d}{d+k}\right)   \lim_{n\to\infty} \frac{1}{|S_n|} \sum_{\X\in S_n} \frac 1{n^k} f(\X) .$$

That is, the coefficients of growth for sphere averages and ball averages are related by the simple expression
$V_{g,S}= \left(\frac{d}{d+k}\right) v_{g,S}$.
\end{sphereball}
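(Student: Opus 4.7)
The plan is to deduce the ball-average from the sphere-average by partitioning $B_n$ into its constituent spheres $S_0 \sqcup S_1 \sqcup \cdots \sqcup S_n$ and applying Theorem~\ref{sphere-asymp} to each piece. Writing
$$\sum_{\X\in B_n} f(\X) = \sum_{j=0}^{n}\sum_{\X\in S_j} f(\X),$$
the sphere-averaging theorem gives $\sum_{\X\in S_j}f(\X) = v_{g,S}\, j^k\, |S_j| + O(j^{k-1}\,|S_j|)$, and Theorem~\ref{growth} gives $|S_j| = dV j^{d-1} + O(j^{d-2})$. Multiplying, the main contribution from $S_j$ is $v_{g,S}\cdot dV \cdot j^{d+k-1}$, with the secondary pieces contributing $O(j^{d+k-2})$.

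Summing via the elementary estimate $\sum_{j=1}^n j^{d+k-1} = \frac{n^{d+k}}{d+k} + O(n^{d+k-1})$ and dividing by $|B_n| = V n^d + O(n^{d-1})$, I expect to obtain
$$\frac{1}{|B_n|} \sum_{\X\in B_n} f(\X) = \frac{d}{d+k}\cdot v_{g,S}\cdot n^k + O(n^{k-1}).$$
Dividing by $n^k$ and letting $n\to\infty$ then simultaneously shows that the ball-limit exists and identifies $V_{g,S} = \frac{d}{d+k}\, v_{g,S}$, which is the content of the theorem.

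The main obstacle is just the error-term accounting: I need the $O(j^{k-1})$ in Theorem~\ref{sphere-asymp} to be uniform in $j$ so that the per-sphere errors sum to $O(n^{d+k-1})$, and I must check that the cross terms (between secondary parts of $\sum_{S_j} f$ and of $|S_j|$) do not pollute the leading coefficient. As a plausibility check for the factor $\frac{d}{d+k}$, note that it is precisely what a polar decomposition of Lebesgue measure on $Q$ with respect to the cone measure produces: writing $\X = r\Y$ with $r = \normL{\X} \in [0,1]$ and $\Y \in L$, one has $d\Vol \propto r^{d-1}\,dr\,d\muL(\Y)$, so the homogeneity $g(r\Y) = r^k g(\Y)$ yields the ratio $\int_0^1 r^{d+k-1}\,dr \big/ \int_0^1 r^{d-1}\,dr = d/(d+k)$, matching the coefficient computed above.
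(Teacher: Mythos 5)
Your argument is correct, but it is a genuinely different route from the paper's. You decompose the ball into its spheres, $B_n=S_0\sqcup\cdots\sqcup S_n$, apply Theorem~\ref{sphere-asymp} together with the spherical growth asymptotics of Theorem~\ref{growth} shell by shell, and extract the factor $\frac{d}{d+k}$ from the discrete sum $\sum_{j\le n} j^{d+k-1}=\frac{n^{d+k}}{d+k}+O(n^{d+k-1})$. The paper never touches this decomposition: it starts from the ball-average limit $\int_Q g\,d\Vol$ of Remark~\ref{ball-asymp} (a Riemann sum, since $\frac1n B_n$ equidistributes in $Q$) and then performs precisely the polar decomposition you relegate to a ``plausibility check'' --- writing $d\Vol$ on $Q\cong I\times L$ as $d\cdot t^{d-1}\,dt\,d\muL$ and using $g(t\X)=t^kg(\X)$ to factor the integral as $d\int_0^1 t^{d+k-1}\,dt\cdot\int_L g\,d\muL=\frac{d}{d+k}\,v_{g,S}$. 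In other words, your closing paragraph is essentially the paper's entire proof, and your main argument is an alternative to it. Each has its merits: the paper's version is shorter and needs only the two limit statements, with no error-term bookkeeping; yours derives the ball asymptotics as a genuine corollary of the sphere asymptotics and makes the discrete shell-integration structure explicit. The worries you flag are harmless: the $O(j^{k-1})$ in Theorem~\ref{sphere-asymp} carries a single implied constant valid for all large $j$ (and the finitely many small shells contribute $O(1)$), the cross terms are $O(j^{d+k-2})$ and sum to $O(n^{d+k-1})$ as you suspect, and even without any error rate a Stolz--Ces\`aro argument would close the sum from the bare limit statement.
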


\begin{proof}
We will repeatedly use the facts that $f\sim g$ and $g(a\X)=a^kg(\X)$.  Since $\frac 1n B_n$ is uniformly filling in $Q$,
the first equality is just a Riemann sum.  
\begin{eqnarray*}
\lim_{n\to\infty} \frac{1}{|B_n|} \sum_{\X\in B_n} \frac 1{n^k}f(\X) 
&=& 
\int_Q g(\X) \ d\Vol(\X) =
\int_{I\times L} g(t\X) \ [d\cdot t^{d-1}dt] \ d\mu(\X) \\
&=& d \cdot \left(\int_0^1 t^{d+k-1} \ dt\right) \ \int_L g(\X) \ d\mu(\X) \\
&=& \left(\frac{d}{d+k}\right) \lim_{n\to\infty} \frac{1}{|S_n|} \sum_{\X\in S_n} \frac 1{n^k} f(\X) .
\end{eqnarray*}
\end{proof}

\subsection{Higher moments}

Two very natural families of asymptotically homogeneous functions on $\Z^d$ are
collection of word norms and Minkowski norms.  Applying Theorem \ref{sphereball} to these,
we get information on the expected word-norm and expected location in $\R^d$ for elements
in the ball $B_n$ of radius $n$:

\begin{corollary}[Expectations]
For any finite generating set for $\Z^d$, the expected geodesic spelling length of words in
the ball $B_n$ of radius $n$ is $\frac{d}{d+1} n$, and the expected location in $\R^n$ is
on the polygon $\frac{d}{d+1}nL$ where $L$ is the boundary of the convex hull of the chosen
generating set.
\end{corollary}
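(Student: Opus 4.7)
The plan is to observe that both claims are instances of Theorem~\ref{sphereball} applied to size-like functions of order $k=1$. The only work is to identify the correct asymptotically homogeneous functions and to evaluate the cone-measure integrals on $L$.

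For the first assertion, I would take $f(\X) = |\X|$, the geodesic word length. By Lemma~\ref{bounded-diff}, $|\X| = \normL{\X} + O(1)$, so $f \sim g$ with $g(\X) = \normL{\X}$, which is homogeneous of order $1$. The coefficient for the sphere average is
$$v_{g,S} = \int_L \normL{\X}\, d\muL(\X) = \int_L 1\, d\muL = 1,$$
since $L$ is by definition the unit sphere of $\normL{\cdot}$ and $\muL$ is a probability measure. Theorem~\ref{sphereball} then gives $V_{g,S} = \frac{d}{d+1}$, so
$$\frac{1}{|B_n|}\sum_{\X \in B_n} |\X| = \frac{d}{d+1}\,n + o(n),$$
which is the first claim.

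For the second assertion, I would apply the same recipe to $f(\X) = \normL{\X}$ itself (already homogeneous of order $1$). The identical computation yields expected Minkowski norm $\tfrac{d}{d+1}n$ in the ball of radius $n$. Since the level set $\{\X \in \R^d : \normL{\X} = c\}$ is precisely $cL$, the "expected location" of an element of $B_n$, measured by the $\normL{\cdot}$ distance from the identity, is on the dilated polytope $\tfrac{d}{d+1}nL$. (One should read the statement this way rather than as a vector-valued average, since $L$ is centrally symmetric and the vector average is trivially zero.)

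There is no real obstacle: both parts reduce to the identity $\int_L \normL{\X}\, d\muL = 1$ combined with the factor $\tfrac{d}{d+k}$ from Theorem~\ref{sphereball} specialized to $k=1$. If one wanted to record explicit error terms rather than just leading-order expectations, these would be inherited from the $O(n^{k-1}) = O(1)$ error in Theorem~\ref{sphere-asymp} and Remark~\ref{ball-asymp} applied to $k=1$.
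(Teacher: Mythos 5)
Your proposal is correct and matches the paper's approach: the paper likewise obtains this corollary by applying Theorem~\ref{sphereball} to the word norm and the Minkowski norm, with the coefficient $v_{g,S}=\int_L \normL{\X}\,d\muL=1$ being immediate since $\normL{\cdot}\equiv 1$ on $L$. Your reading of ``expected location'' as the expected value of $\normL{\X}$ (rather than a vector average, which vanishes by central symmetry) is also the intended one, as the paper's follow-up remark about $S_{2n/3}$ confirms.
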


Thus,  the expected position of a word in $B_n$ is on $S_{2n/3}$, independent of the choice of generating set.

To see this in an example, consider $(\Z^2,\std)$.  As above, set up $B_n$ as the union
of $S_j$ for $0\le j\le n$, noting  that $\#S_j=4j$ for $j\ge 1$
and $\|\X\|=j$ for $\X\in S_j$.   Thus
the average wordlength over the ball is 
$$\frac{4\sum_1^n j^2}{1+4\sum_1^n j} = \frac{4n^3+6n^2+2n}{6n^2+6n+3},$$
which grows like $\frac 23 n$, as predicted.  Though it was straightforward to calculate this 
directly for the simplest choice of generators, it is not apparent a priori how to proceed for an arbitrary generating set.

More generally we can compute higher moments by applying Theorem \ref{sphereball}
to the functions $f(\X)=|\X|^p$ which are asymptotically homogeneous of order $p$:

\begin{corollary}[Higher moments]
The expected value of $|\X|^p$ over $B_n$ is $\frac{d}{d+p}$.
\end{corollary}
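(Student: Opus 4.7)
The plan is to read the corollary as a direct application of Theorem~\ref{sphereball} to the function $f(\X) = |\X|^p$, interpreting the conclusion as the statement that the normalized expected value $\frac{1}{n^p|B_n|}\sum_{\X\in B_n} |\X|^p$ converges to $\frac{d}{d+p}$.

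First I would verify that $f(\X) = |\X|^p$ is asymptotically homogeneous of order $p$, with witnessing homogeneous function $g(\X) = \normL{\X}^p$. This is a short consequence of Lemma~\ref{bounded-diff}: we have $\normL{\X} \le |\X| \le \normL{\X} + K$, so for $\X \to \infty$ (which forces $\normL{\X}\to\infty$),
\[
    1 \le \frac{|\X|}{\normL{\X}} \le 1 + \frac{K}{\normL{\X}} \longrightarrow 1,
\]
and hence $(|\X|/\normL{\X})^p \to 1$, i.e., $f \sim g$. The function $g$ is homogeneous of order $p$ because $\normL{\cdot}$ is homogeneous of order $1$.

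Next I would apply Theorem~\ref{sphereball} to this $f$ and $g$, which yields
\[
    \lim_{n\to\infty} \frac{1}{|B_n|} \sum_{\X\in B_n} \frac{1}{n^p} |\X|^p
    = \frac{d}{d+p} \int_L \normL{\X}^p\, d\mu(\X).
\]
By the defining property of the Minkowski norm $\normL{\cdot}$, the polytope $L$ is exactly the unit sphere for this norm, so $\normL{\X} = 1$ for every $\X \in L$. Since $\mu = \muL$ is a probability measure on $L$, the integral evaluates to $1$, and the right-hand side collapses to $\frac{d}{d+p}$.

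There is no real obstacle here: the corollary is essentially a one-line specialization of Theorem~\ref{sphereball} once the asymptotic-homogeneity check has been made. The only subtlety worth flagging is the (trivial) evaluation of the integral on $L$, which hinges on remembering that $L$ is by construction the unit sphere of the relevant Minkowski norm; this is what makes the answer depend only on $d$ and $p$, and not on the generating set $S$. As a sanity check, the $p=1$ case recovers the preceding corollary and matches the explicit calculation for $(\Z^2,\std)$ given in the text.
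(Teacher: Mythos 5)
Your proposal is correct and matches the paper's (implicit) argument: the text derives this corollary exactly by applying Theorem~\ref{sphereball} to $f(\X)=|\X|^p$, which is asymptotically homogeneous of order $p$ via Lemma~\ref{bounded-diff}, with the integral $\int_L \normL{\X}^p\,d\mu$ equal to $1$ because $L$ is the unit sphere of $\normL{\cdot}$ and $\mu$ is a probability measure. Nothing further is needed.
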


This tells us that 
the higher moments are independent of the choice of word metric as well.  

As we should intuitively expect, the sphere/ball theorem when $k=0$ (so $f$ is close to a 
scale-invariant function) says that 
ball-average is equal to the sphere-average.    This is also the case of  any function averaged
over a group with exponential growth, for a different reason:  there, 
almost all of the points on the ball will be concentrated
on its boundary sphere, so again the limiting ball-average is equal to the limiting sphere-average.

\subsection{Asymptotic density}
The counting results can also be used to find the density of group elements with a particular
property, $(P)$.  For instance, we can say that a group element $\W\in\Z^2$ has a 
{\em simple spelling} if $\W=a\V_i+b\V_{i+1}$ for consecutive significant generators.
We can verify that every word has a simple spelling with respect to the standard generators,
 whereas only 1 in 36 elements has a simple spelling in 
$S=\pm\{6\E_1,\E_1,6\E_2,\E_2\}$ (compare Figure~\ref{distribution}, where the words with simple spellings are those appearing
on the bounding polygon).
Let $r$ be the number of sides of the polygon $Q$ and let $A=\Area(Q)$.
There are $rn$ simple spellings of length $n$,
all representing different group elements.  
On the
other hand, $\#S_n \eadd 2An$.  Thus
$\lim\frac{\# (S_n\cap (P))}{\#S_n}= \lim\frac{\# (B_n\cap (P))}{\#B_n}=\frac{r}{2A}$, 
or in other words:

\begin{corollary} For $(\Z^2,S)$, the density of words with simple spellings is $r/2A$.
\end{corollary}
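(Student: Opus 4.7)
The plan is to identify the set of words with simple spellings of length $n$ with the set $\eP_n$ defined earlier, count this set exactly, and then divide by the sphere (and ball) growth asymptotics from Theorem~\ref{growth}.

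In dimension two, the natural triangulation of $L$ is its decomposition into the $r$ edges $\sigma_i = [\V_i,\V_{i+1}]$, whose extreme points are precisely the significant generators. An element $\W\in S_n$ admits a simple spelling in the sense of this section exactly when $\W = a\V_i + b\V_{i+1}$ for some $i$ and some $a,b\in\Z_{\ge 0}$ with $a+b=n$. Thus the set of length-$n$ words with simple spellings is identically $\eP_n$, and in particular every simple spelling is automatically a geodesic by the discussion following the definition of $\eP_n$.

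Next I would count $|\eP_n|$. For each edge $\sigma_i$, the pairs $(a,b)\in\Z_{\ge 0}^2$ with $a+b=n$ produce $n+1$ distinct group elements, the pairwise distinctness coming from the fact that consecutive vertices of the centrally symmetric polygon $L$ are linearly independent over $\R$ (they cannot be collinear with the origin). Across the $r$ edges, the only overlaps occur at the pure vertices $n\V_i$, each of which is shared by exactly two adjacent edges. Inclusion–exclusion then gives
$$|\eP_n| \;=\; r(n+1) - r \;=\; rn.$$

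Finally, Theorem~\ref{growth} in dimension $d=2$ gives $\#S_n = 2An + O(1)$ and $\#B_n = An^2 + O(n)$. The sphere statement then follows from
$$\frac{\#(S_n\cap (P))}{\#S_n} \;=\; \frac{rn}{2An + O(1)} \;\longrightarrow\; \frac{r}{2A},$$
and summing over spheres yields $\#(B_n\cap (P)) = 1 + \sum_{k=1}^n rk = \tfrac{r}{2}n^2 + O(n)$, so the ball ratio also tends to $r/(2A)$. There is no genuine obstacle; the only point requiring any care is the injectivity of the parametrization of $\eP_n$ by edge-plus-partition, which is immediate from the linear independence of consecutive extreme points of $L$.
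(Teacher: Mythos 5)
Your proof is correct and takes essentially the same route as the paper: identify the simple spellings of length $n$ with the points $a\V_i+b\V_{i+1}$, $a+b=n$, count $rn$ distinct group elements among them, and divide by $\#S_n = 2An+O(1)$ (with the ball version following by summation). The paper merely asserts that there are $rn$ such elements; your inclusion--exclusion count $r(n+1)-r$ supplies the small justification it omits.
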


This does depend on the generating set---only on the convex hull, as usual, but not only on its area---and it holds
uniformly at large word-lengths $n$, as well as when averaging over words of length $\le n$.
As a check, 
recall that Pick's theorem says that $A=i+b/2-1$.  We know that $r\le b$ and $i\ge 1$, 
which means $r/2A \le 1$, which is required for plausibility.   Besides recovering the answers above, 
we also see for instance that 
with respect to the chess-knight generators (see Figure~\ref{chessknight}) the 
probability of simple spellings is $2/7$, which would have been extremely unpleasant
to derive by hand.

We can likewise define simple spellings in higher dimensions.  Given $(\Z^d,S)$, form $L$ as usual, and let 
$\Sigma_1,\ldots,\Sigma_k$ be the facets of $L$, regarded as subsets of $S$ via their extreme points.
(For instance, a pentagon face of some $L\subset\R^3$ corresponds to a five-element subset of $S$.)
Then a simple spelling is one of the form $$\W=\sum_{\A_i\in\Sigma_j} \alpha_i\A_i$$
for some $j$, 
with non-negative weights $\alpha_i$.  Using this and our techniques, one could obtain a number-theoretic expression for the 
proportion of simple spellings using the number of ordered partitions of large integers $n$ into 
$n_j=\# \Sigma_j$ non-negative integers.

\subsection{Sprawl and statistical hyperbolicity}
As is well known, the geometric condition called {\em hyperbolicity} (sometimes called word hyperbolicity, $\delta$-hyperbolicity, or 
Gromov hyperbolicity) gives strong algebraic and geometric information about groups.
Hyperbolicity is a large-scale invariant, so for finitely generated groups, being hyperbolic does not depend on the choice 
of (finite) generating set.  However, if we formulate a metric condition corresponding to hyperbolicity, then the measurements
themselves depend on generators.
We quantify the degree of hyperbolicity with a statistic we call the {\em sprawl} of a group (with respect to a generating set).  
We give a brief mention 
here, but develop some results and conjectures in ~\cite{dlm2}.

The sprawl of a group measures the average distance between pairs of points on the
spheres in the word metric, normalized by the radius, as the spheres get large.  Sprawl
thus gives a numerical measure of the asymptotic shape of spheres, which can be studied for arbitrary presentations
of finitely generated groups.
To be precise, let
$$E(G,S) := \lim_{n\to\infty}\frac{1}{|S_n|^2}\sum_{x,y\in S_n} \frac 1n d(x,y),$$ 
provided this limit exists.  Note that since $0\le d(x,y)\le 2n$, the value is always
between 0 and 2.  By way of interpretation, note that $E=2$ means that one can almost
always pass through the origin when traveling between any two points on the sphere without
taking a significant detour.  
This statistic is not quasi-isometry invariant but nonetheless captures
interesting features of the large-scale geometry.  

Hyperbolicity is often characterized with the slogan  that ``triangles are thin," meaning that the third side of a geodesic triangle
must stay within bounded distance of the other two sides.  In terms of $x,y\in S_n$, this says that the geodesic
$\overline{xy}$ should be about as long as $d(x,0)+d(0,y)$, provided that $\overline{0x}$ and $\overline{0y}$ 
do not fellow-travel.  Thus, if fellow-traveling is relatively rare in a Gromov hyperbolic space, 
then we will have $E=2$.
We show in \cite{dlm2} that for any non-elementary hyperbolic group 
with any generating set, $E(G,S)=2$.  
(Recall that a hyperbolic group is called {\em elementary} if it is finite or has a
finite-index cyclic subgroup.)  Thus, $E<2$ is an obstruction to hyperbolicity.  We will say that a presentation $(G,S)$ 
is {\em statistically hyperbolic} if $E(G,S)=2$; this does not imply that $G$ is a hyperbolic group, but only that 
this metric calculation works out {\em on average} as though it were.  (For example, $F_2\times \Z$ is statistically
hyperbolic with respect to its standard generators.)

We can study statistical hyperbolicity for free abelian groups with the tools developed in this paper.
For a function of several variables  $f: (\Z^d)^m\to \R$ asymptotic to 
$g:(\R^d)^m \to \R$ with $g$  homogeneous of order $k$, Theorem~\ref{sphere-asymp} tells us that 
$$\lim_{n\to\infty} \frac{1}{|S_n|^m} \sum_{\X\in (S_n)^m} \frac 1{n^k} f(\X) 
= \int_{L^m} g(\X) \ d\mu^m(\X).$$

Thus it follows immediately from the main result of this paper that 
$$E(\Z^d,S)= \int_{L^2}  \normL{\X - \Y} \ d\mu^2$$
for all finite generating sets $S$.  That means that we know exactly how sprawl depends on the generators.  (Furthermore,
there is an exact algorithm for computing this, presented in \cite{dlm2}.)  
With the usual tools for coarse geometry, $\Z^d$ would be indistinguishable from the Euclidean space $\R^d$, since they 
are quasi-isometric.  But here we can compare the geometry of $\R^3$   to $\Z^3$ with
the six standard generators $\langle \pm \E_1, \pm \E_2, \pm \E_3 \rangle$ to $\Z^3$ with the eight
generators $\langle \pm \E_1 \pm \E_2 \pm \E_3 \rangle$ to the free group $F_3$, 
and see that they are arranged from least to most hyperbolic, having $E$ values $4/3 < 7/5 < 64/45 < 2$.
Thus sprawl is indeed a tool that detects geometric differences between presentations, and allows us to 
measure their degree of hyperbolicity.

We end with a corollary which says that 
no free abelian group can ever be fully statistically hyperbolic.

\begin{corollary}
For any free abelian group $\Z^d$ with any finite generating set $S$, the sprawl statistic $E(\Z^d,S)$ is well-defined
and $E(\Z^d,S)<2$.
\end{corollary}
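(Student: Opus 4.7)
The plan is to combine the integral representation of sprawl derived in the paragraph preceding this corollary with a strict form of the triangle inequality. That formula,
$$E(\Z^d, S) = \int_{L^2} \normL{X - Y} \ d\mu^2(X, Y),$$
is already in hand: the preceding discussion obtains it by applying Theorem~\ref{sphere-asymp} (in its multivariable guise) to $(x,y) \mapsto d(x,y)$, which by Lemma~\ref{bounded-diff} is at bounded additive distance from the homogeneous-of-order-one function $(X,Y)\mapsto\normL{X-Y}$. Since the integrand is bounded and continuous on the compact set $L\times L$, the limit defining $E(\Z^d, S)$ automatically exists and is finite, which handles the well-definedness half of the corollary.

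Next, I would establish the weak bound $E(\Z^d, S) \le 2$ by pointwise triangle inequality: every $X \in L$ satisfies $\normL{X} = 1$, so $\normL{X - Y} \le \normL{X} + \normL{Y} = 2$ throughout $L \times L$. To upgrade this to strict inequality, I locate a set of positive $\mu^2$-measure on which the integrand is bounded away from $2$. Continuity of $(X, Y)\mapsto\normL{X-Y}$, combined with its vanishing on the diagonal, implies that $U := \{(X, Y) \in L \times L : \normL{X - Y} < 1\}$ is a nonempty open neighborhood of $\{X = Y\}$. Splitting the integral yields
$$E(\Z^d, S) \le 1 \cdot \mu^2(U) + 2 \cdot \bigl(1 - \mu^2(U)\bigr) = 2 - \mu^2(U),$$
so it remains only to establish $\mu^2(U) > 0$.

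This last point is essentially the only step requiring verification and constitutes the (mild) main obstacle. It follows from the explicit description of the cone measure in Theorem~\ref{strongconv}: on each top-dimensional face $F$ of $L$, $\mu$ restricts to a positive multiple of $(d-1)$-dimensional Lebesgue measure on $F$. Picking any $X_0$ in the relative interior of such a face $F$, a sufficiently small Euclidean neighborhood of $(X_0, X_0)$ inside $F \times F$ lies in $U$ and has positive $(\mu|_F)^2$-measure, hence positive $\mu^2$-measure. The strict inequality $E(\Z^d, S) < 2$ follows.
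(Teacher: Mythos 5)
Your proposal is correct and follows essentially the same route as the paper: both start from the integral formula $E(\Z^d,S)=\int_{L^2}\normL{\X-\Y}\,d\mu^2$, bound the integrand by $2$, and then gain strict inequality by exhibiting a positive-$\mu^2$-measure neighborhood of the diagonal (using that cone measure is mutually absolutely continuous with Lebesgue measure on each face) where $\normL{\X-\Y}$ is bounded away from $2$. Your write-up is just a more explicit version of the paper's argument, with the splitting $E\le 2-\mu^2(U)$ spelled out.
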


\begin{proof}
The sprawl is computed by integrating against a measure that is absolutely continuous with Lebesgue measure, 
and we are integrating a function whose maximum value is $2$.  But a small neighborhood $A$ of 
any point on the polyhedron has positive measure, so $A\times A$ has positive measure, and
on that set the integrand is strictly less than two.
\end{proof}


\end{document}